\newtheorem{theorem}{Theorem}[section]
\newtheorem{lemma}[theorem]{Lemma}
\newtheorem{corollary}[theorem]{Corollary}
\theoremstyle{definition}
\newtheorem{definition}[theorem]{Definition}
\newtheorem{remark}[theorem]{Remark}
\numberwithin{equation}{section}
\newcommand{\R}{\mathbb{R}}%
\newcommand{\N}{\mathbb{N}}%
\newcommand{\e}{\varepsilon}%
\newcommand{\ol}{\overline}%
\newcommand{\n}{{\nabla}}
\newcommand{\ds}{\displaystyle}
\newcommand{\To}{\longrightarrow}
\def\a{\alpha}
\def\b{\beta}
\def\e{\epsilon}
\def\t{\theta}
\def\g{\gamma}
\def\s{\sigma}
\def\l{\lambda}
\def\<{\langle}
\def\>{\rangle}
\DeclareMathOperator*\crit{crit}
\DeclareMathOperator*\dist{dist}
\begin{document}
\setcounter{page}{1}

\vspace*{2.0cm}
\title[A second order dynamical  system with unbounded damping]
{Solving nonconvex optimization problems via a second order dynamical  system with unbounded damping}
\author[Szil\'{a}rd Csaba L\'{a}szl\'{o}]{ Szil\'{a}rd Csaba L\'{a}szl\'{o}$^{1,*}$}
\maketitle
\vspace*{-0.6cm}

\begin{center}
{
\footnotesize

$^1$Department of Mathematics, Technical University of Cluj-Napoca,  Memorandumului 28, Cluj-Napoca, Romania
}

This paper is dedicated to the  memory of Professor Hedy Attouch

\end{center}

\vskip 4mm {\footnotesize \noindent {\bf Abstract.}
In this paper we study a second order dynamical system with variable coefficients in connection  to the minimization problem of a smooth nonconvex function. The convergence of the  trajectories generated by the dynamical system to a critical point of the objective function is assured, provided a regularization of the objective function satisfies the Kurdyka-\L{}ojasiewicz property. We also provide convergence rates for the trajectories generated by the dynamical system, formulated in terms of the \L{}ojasiewicz exponent, and we show that the unbounded damping considered in our dynamical system significantly  improves the convergence rates known so far in the literature, that is, instead of linear rates we obtain superlinear rates.

 \noindent {\bf Keywords.}
second order dynamical system; unbounded damping; nonconvex optimization;  Kurdyka-\L{}ojasiewicz inequality; convergence rate; superlinear rate.

 \noindent {\bf 2020 Mathematics Subject Classification.}
90C26, 90C30, 65K10. }

\renewcommand{\thefootnote}{}
\footnotetext{ $^*$Corresponding author.
\par
E-mail address: szilard.laszlo@math.utcluj.ro.
\par
Received xx, x, xxxx; Accepted xx, x, xxxx.

\rightline {\tiny   \copyright  2022 Communications in Optimization Theory}}

\section{Introduction}\label{sec-intr}

Consider the second order dynamical system
\begin{equation}\label{dysy}\tag{DS}
\left\{
\begin{array}{lll}
\ds \frac{\l}{t^2}\ddot{x}(t)+\frac{\g}{t}\dot{x}(t)+\nabla g(x(t))=0\\
\\
\ds x(t_0)=u_0,\,\dot{x}(t_0)=v_0,
\end{array}
\right.
\end{equation}
in connection to the optimization problem
\begin{equation}\label{opt-pb}\tag{P} \ \inf_{x\in\R^n}g(x), \end{equation}
where $t_0>0,$ $g:\R^n\To \R$ is a possibly nonconvex  Fr\'{e}chet differentiable function with $L_g$-Lipschitz continuous gradient,  $u_0,v_0\in \R^n$ and $\g,\l\in (0,+\infty).$
Note that the governing differential equation of \eqref{dysy}  can be written as $\ddot{x}(t)+\frac{\g}{\l} t\dot{x}(t)+\frac{t^2}{\l}\nabla g(x(t))=0$ where the damping $\frac{\g}{\l}  t$ is indeed unbounded. Of course, instead of \eqref{dysy} one can consider the dynamical system  $\l(t)\ddot{x}(t)+\g(t)\dot{x}(t)+\nabla g(x(t))=0,\,\ds x(t_0)=u_0,\,\dot{x}(t_0)=v_0$ provided $\l(t)=\mathcal{O}\left(\frac{1}{t^2}\right)$ and $\g(t)=\mathcal{O}\left(\frac{1}{t}\right)$ as $t\to+\infty$ and the analysis presented in the paper remains valid.  We emphasize that  a particular instance of the latter system can be obtained from the dynamical system studied in \cite{beg-bolt-jen}, that is,
\begin{equation}\label{frball}\tag{BBJ}
 \ddot{y}(s)+\g\dot{y}(s)+\n g(y(s))=0,\,y(s_0)=u_0,\,\dot{y}(s_0)=v_0,\,b>0,
\end{equation} by using the time rescaling $s=t^2,\,x(t)=y(s).$
Indeed, after some easy computations we obtain
$$\frac{1}{4t^2}\ddot{x}(t)+\left(\frac{\g}{2t}-\frac{1}{4t^3}\right)\dot{x}(t)+\n g(x(t))=0,$$
therefore in some sense \eqref{dysy} and \eqref{frball} seem to be equivalent. However, according to \cite{beg-bolt-jen}, (see also \cite{BCL}), the trajectories generated by \eqref{frball} converge  to a critical point of $g$, provided $g$ is of class $C^2$ and satisfies the Kurdyka-{\L}ojasiewicz property, further the desingularizing function $\varphi$ satisfies
$$\varphi(s)\ge c\sqrt{s},\,s\in(0,\eta),$$
where $c,\eta>0$.

In contrast, in this paper we obtain the convergence of a trajectory generated by \eqref{dysy} to a critical point of $g$, provided the regularization  of $g$,
$$H:\R^n\times \R^n\To\R,\,H(u,v)=g(u)+\frac{1}{2}\|u-v\|^2$$
satisfies the Kurdyka-{\L}ojasiewicz inequality. Moreover, we will obtain some convergence rates that are better than those obtained in \cite{beg-bolt-jen} or \cite{BCL}. In particular, in case of strongly convex objective functions, we will obtain superlinear rates instead of linear rates.

Another motivation for the study of the dynamical system \eqref{dysy} arises in the following setting.
Consider the perturbed dynamical system studied in \cite{BCL1}, that is,
\begin{equation}\label{dysy10}\tag{BCL}
\ddot{x}(t)+\left(\frac{\a}{t}+\g\right)\dot{x}(t)+\nabla g(x(t))=0,\,
x(t_0)=u_0,\,\dot{x}(t_0)=v_0,
\end{equation}
where $u_0,v_0\in \R^n$ and $\a,\g\in (0,+\infty).$ Indeed, \eqref{dysy10} is a perturbed version of the dynamical system considered by Su, Boyd and Cand\`es in \cite{SBC}, the latter one can be obtained by taking $\g=0$ in \eqref{dysy10}. However the objective function in \cite{SBC} is assumed to be convex, and in this context the dynamical system studied in \cite{SBC} can be seen as the continuous counterpart of the celebrated Nesterov accelerated convex gradient method \cite{Nest1}.

By using the time rescaling $t=s^2$ and by denoting $y(s)=x\left(s^2\right),$ the system \eqref{dysy10} can be rewritten as
\begin{equation*}
\frac{1}{4 s^2}\ddot{y}(s)+\left(\frac{\g}{2s}-\frac{2\a-1}{4s^3}\right)\dot{y}(s)+\nabla g(y(s))=0,\,
y(s_0)=u_0,\,\dot{y}(s_0)=2s_0 v_0,
\end{equation*}
where $s_0=\sqrt{t_0}.$ Consequently the dynamical system \eqref{dysy} is also similar to the  dynamical system \eqref{dysy10}.

According to \cite{BCL1}, the trajectories generated by \eqref{dysy10} converge  to a critical point of $g$, provided the function $g(u)+\frac{1}{2}\|u-v\|^2,\,u,v\in\R^n$ satisfies the Kurdyka-{\L}ojasiewicz inequality.

Note that in case $g$ is a strongly convex function then according to \cite{beg-bolt-jen} and \cite{BCL1} one has
$lim_{t\to+\infty}x(t)=\ol x$
where $\ol x$ is a critical point of $g.$ Further, it holds the linear rate
$\|x(t)-\ol x\|=\mathcal{O}(e^{-At})$ as $t\to+\infty,$ where $A>0.$
In contrast, for strongly convex objective functions, in this paper we will obtain the superlinear rate
$$\|x(t)-\ol x\|=\mathcal{O}(e^{-At^2})\mbox{ as }t\to+\infty.$$
The paper is organized as follows. After presenting some preliminary notion and results that we need in order to carry out our analysis, in section 3 we show the existence and uniqueness of the trajectories of the dynamical system \eqref{dysy}. We also show that the third order derivative exists almost everywhere. In section 4 we show the convergence of the trajectories generated by the dynamical system \eqref{dysy} to a critical point of our objective function $g$. We also discuss the cases when $g$ is semialgebraic or $g$ is strongly convex.  In section 5 we obtain convergence rates in terms of the {\L}ojasiewicz exponent of the objective function. In particular we show that for strongly convex objective functions the trajectories generated by the dynamical system \eqref{dysy} converge in superlinear rate to a critical point of $g.$

\section{Preliminaries}

The finite-dimensional spaces considered in the  manuscript are endowed with the Euclidean norm topology.
The Fermat rule  in this  setting reads as: if $x\in\R^n$ is a local minimizer of $g$ then $0\in\nabla g(x)$.  We denote by
$$\crit(g)=\{x\in\R^n: 0\in\n g(x)\}$$ the set of {\it critical points} of $g$.

\begin{definition}\label{abs-cont} \rm (see, for instance, \cite{att-sv2011, abbas-att-sv}) A function $x:[0,+\infty)\rightarrow \R^n$  is said to be locally absolutely continuous, if is absolutely continuous on every interval $[0,T],\,T>0$, that is, one of the
following equivalent properties holds:

(i)  there exists an integrable function $y:[0,T]\rightarrow \R^n$ such that $$x(t)=x(0)+\int_0^t y(s)ds \ \ \forall t\in[0,T];$$

(ii) $x$ is continuous and its distributional derivative is Lebesgue integrable on $[0,T]$;

(iii) for every $\varepsilon > 0$, there exists $\eta >0$ such that for any finite family of intervals $I_k=(a_k,b_k)\subseteq [0,T]$ we have the implication
$$\left(I_k\cap I_j=\emptyset \mbox{ and }\sum_k|b_k-a_k| < \eta\right)\Longrightarrow \sum_k\|x(b_k)-x(a_k)\| < \varepsilon.$$
\end{definition}

\begin{remark}\label{rem-abs-cont}\rm (a) It follows from the definition that an absolutely continuous function is differentiable almost
everywhere, its derivative coincides with its distributional derivative almost everywhere and one can recover the function from its derivative $\dot x=y$ by the integration formula (i).

(b) If $x:[0,T]\rightarrow \R^n$ (where $T>0$) is absolutely continuous and $B:\R^n\rightarrow \R^n$ is $L$-Lipschitz continuous
(where $L\geq 0$), then the function $z=B\circ x$ is absolutely continuous, too. This can be easily seen by using the characterization of absolute continuity in
Definition \ref{abs-cont}(iii). Moreover, $z$ is almost everywhere differentiable and the inequality $\|\dot z (\cdot)\|\leq L\|\dot x(\cdot)\|$ holds almost everywhere.
\end{remark}

We recall two technical lemmas which will play an essential role in our analysis, (see  \cite{abbas-att-sv}  Lemma 5.1 and Lemma 5.2, respectively).

\begin{lemma}\label{fejer-cont1} Suppose that $F:[0,+\infty)\rightarrow\R$ is locally absolutely continuous and bounded below and that
there exists $G\in L^1([0,+\infty))$ such that for almost every $t \in [0,+\infty)$ $$\frac{d}{dt}F(t)\leq G(t).$$
Then there exists $\lim_{t\rightarrow \infty} F(t)\in\R$.
\end{lemma}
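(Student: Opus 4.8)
The plan is to reduce the statement to the elementary fact that a non-increasing function which is bounded below admits a finite limit at infinity. To this end I would introduce the auxiliary function $H:[0,+\infty)\To\R$ defined by
$$H(t)=F(t)-\int_0^t G(s)\,ds.$$

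First I would record that, since $G\in L^1([0,+\infty))$, the map $t\mapsto \int_0^t G(s)\,ds$ is locally absolutely continuous and, crucially, converges to the finite limit $\int_0^{+\infty}G(s)\,ds$ as $t\to+\infty$; moreover it stays bounded, with $\left|\int_0^t G(s)\,ds\right|\le \int_0^{+\infty}|G(s)|\,ds$ for every $t\geq 0$. Consequently $H$ is locally absolutely continuous, being a difference of such functions, and by Remark \ref{rem-abs-cont}(a) its derivative coincides almost everywhere with its distributional derivative, namely $\dot H(t)=\dot F(t)-G(t)\le 0$ for almost every $t\geq 0$, by the hypothesis $\frac{d}{dt}F(t)\leq G(t)$.

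The key step is then to deduce that $H$ is non-increasing. For any $0\le a\le b$, the integration formula of Remark \ref{rem-abs-cont}(a) gives $H(b)-H(a)=\int_a^b \dot H(s)\,ds\le 0$, whence $H(b)\le H(a)$. Since $F$ is bounded below, say by some $m\in\R$, and the integral term is bounded, $H$ is itself bounded below by $m-\int_0^{+\infty}|G(s)|\,ds\in\R$. A non-increasing function that is bounded below necessarily has a finite limit at $+\infty$, so $\lim_{t\to+\infty}H(t)\in\R$ exists. Finally I would recover the conclusion by writing $F(t)=H(t)+\int_0^t G(s)\,ds$ and adding the two limits, obtaining $\lim_{t\to+\infty}F(t)=\lim_{t\to+\infty}H(t)+\int_0^{+\infty}G(s)\,ds\in\R$.

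Since each ingredient is standard, there is no serious obstacle here; the only point requiring a little care, and the sole place where local absolute continuity is genuinely used, is the passage from $\dot H\le 0$ almost everywhere to the monotonicity of $H$, which relies precisely on the recovery of $H$ from its almost-everywhere derivative via integration. The integrability of $G$ is what guarantees that subtracting its primitive neither destroys the boundedness below inherited from $F$ nor introduces an infinite contribution to the limit.
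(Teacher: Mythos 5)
Your proof is correct, and there is nothing to compare it against inside the paper: Lemma \ref{fejer-cont1} is stated there without proof, being recalled from \cite{abbas-att-sv} (Lemma 5.1). Your argument --- passing to $H(t)=F(t)-\int_0^t G(s)\,ds$, using the integration formula of Remark \ref{rem-abs-cont}(a) to get monotonicity of $H$ from $\dot H\le 0$ a.e., and adding back the convergent primitive of $G$ --- is exactly the standard proof of that cited lemma, with the one genuinely delicate point (that almost-everywhere nonpositivity of the derivative yields monotonicity only thanks to absolute continuity) correctly identified and handled.
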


\begin{lemma}\label{fejer-cont2}  If $1 \leq p < \infty$, $1 \leq r \leq \infty$, $F:[0,+\infty)\rightarrow[0,+\infty)$ is
locally absolutely continuous, $F\in L^p([0,+\infty))$, $G:[0,+\infty)\rightarrow\R$, $G\in  L^r([0,+\infty))$ and
for almost every $t \in [0,+\infty)$ $$\frac{d}{dt}F(t)\leq G(t),$$ then $\lim_{t\rightarrow +\infty} F(t)=0$.
\end{lemma}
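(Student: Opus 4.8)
The plan is to first show that $\liminf_{t\to+\infty}F(t)=0$ and then to upgrade this to $\lim_{t\to+\infty}F(t)=0$ by an argument by contradiction exploiting the differential inequality together with the integrability of $G$. For the first step I would argue directly: since $F\ge 0$ and $F\in L^p([0,+\infty))$ with $p<\infty$, if we had $\liminf_{t\to+\infty}F(t)=2c>0$ there would exist $T>0$ with $F(t)\ge c$ for all $t\ge T$, whence $\int_T^{+\infty}F(t)^p\,dt=+\infty$, contradicting $F\in L^p$. Hence $\liminf_{t\to+\infty}F(t)=0$.

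For the second step, suppose for contradiction that $\limsup_{t\to+\infty}F(t)=\ell>0$ (possibly $+\infty$) and fix $0<\e<\ell$. Since $\liminf F=0<\e/2$ and $\limsup F>\e$, the continuity of $F$ lets me extract a sequence of pairwise disjoint intervals $[a_n,b_n]$ with $a_n\to+\infty$ such that $F(a_n)=\e/2$, $F(b_n)=\e$ and $\e/2\le F(t)\le\e$ for all $t\in[a_n,b_n]$. The lower bound $F\ge\e/2$ on these intervals together with $F\in L^p$ gives $(\e/2)^p\sum_n(b_n-a_n)\le\sum_n\int_{a_n}^{b_n}F(t)^p\,dt\le\int_0^{+\infty}F(t)^p\,dt<+\infty$, so that $b_n-a_n\to0$.

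On the other hand, since $\dot F\le G\le [G]_+$ almost everywhere and $F$ is locally absolutely continuous, the fundamental theorem of calculus yields $\e/2=F(b_n)-F(a_n)=\int_{a_n}^{b_n}\dot F(\tau)\,d\tau\le\int_{a_n}^{b_n}[G(\tau)]_+\,d\tau$ for every $n$. It then remains to show that the right-hand side tends to $0$, which contradicts the constant positive left-hand side. When $r=1$ this is immediate, as $\int_{a_n}^{b_n}[G]_+\le\int_{a_n}^{+\infty}[G]_+\to0$ is a tail of an $L^1$ function; when $r=\infty$ I bound $\int_{a_n}^{b_n}[G]_+\le\|G\|_{\infty}(b_n-a_n)\to0$; and when $1<r<\infty$, H\"{o}lder's inequality with conjugate exponent $q$ (that is, $1/q+1/r=1$) gives $\int_{a_n}^{b_n}[G]_+\le\left(\int_{a_n}^{b_n}[G]_+^r\right)^{1/r}(b_n-a_n)^{1/q}\le\|G\|_{L^r}(b_n-a_n)^{1/q}\to0$, using $b_n-a_n\to0$. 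In every case we reach a contradiction, so $\limsup_{t\to+\infty}F(t)=0$; combined with $F\ge0$ this proves $\lim_{t\to+\infty}F(t)=0$.

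I expect the main obstacle to be the fact that $G\in L^r$ for $r>1$ need not belong to $L^1$, so one cannot simply produce an $L^1$ majorant for $\dot F$ and invoke Lemma \ref{fejer-cont1} to obtain the existence of $\lim F$ and then read off its value from $\liminf F=0$. The crossing-interval argument circumvents this: the $L^p$ control of $F$ forces the oscillation intervals to shrink ($b_n-a_n\to0$), and this shrinking is exactly what H\"{o}lder's inequality needs in order to control $\int_{a_n}^{b_n}[G]_+$ over these short tail intervals even when $[G]_+\notin L^1$. A secondary technical point worth stating carefully is the construction of the disjoint crossing intervals with the two prescribed boundary values, which relies only on the continuity of $F$ and the strict inequalities $\liminf F<\e/2<\e<\limsup F$.
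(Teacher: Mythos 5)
Your proof is correct, but note that the paper itself does not prove this lemma at all: it is recalled verbatim from Abbas--Attouch--Svaiter \cite{abbas-att-sv} (Lemma 5.2 there), so there is no in-paper argument to compare against; what you have produced is a self-contained substitute for the citation. Your argument is sound in all its steps: the $L^p$ bound with $p<\infty$ forces $\liminf_{t\to+\infty}F(t)=0$; the crossing intervals $[a_n,b_n]$ with $F(a_n)=\varepsilon/2$, $F(b_n)=\varepsilon$, $\varepsilon/2\le F\le\varepsilon$ on $[a_n,b_n]$ exist by the standard first-exit/last-entry construction (take $s_n<t_n\to+\infty$ with $F(s_n)<\varepsilon/2$, $F(t_n)>\varepsilon$, set $b_n=\inf\{t\ge s_n: F(t)\ge\varepsilon\}$ and $a_n=\sup\{t\le b_n: F(t)\le\varepsilon/2\}$) --- this is the one step you only sketch, and you rightly flag it as the point to write out; the disjointness plus $F\ge\varepsilon/2$ gives $\sum_n(b_n-a_n)<+\infty$, hence $b_n-a_n\to0$; and the absolute continuity of $F$ on compacta justifies $\varepsilon/2=\int_{a_n}^{b_n}\dot F\le\int_{a_n}^{b_n}[G]_+$, which your three-case analysis ($r=1$ tail estimate, $r=\infty$ trivial bound, $1<r<\infty$ H\"older with $(b_n-a_n)^{1/q}\to0$) correctly drives to $0$, yielding the contradiction. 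Your diagnostic remark is also the right one: for $r=1$ a shorter route exists (use $[G]_+\in L^1$ as majorant, invoke Lemma \ref{fejer-cont1} to get that $\lim_{t\to+\infty}F(t)$ exists, then $F\in L^p$ forces the limit to be $0$), but this collapses for $r>1$ since $G$ need not be integrable, and your uniform crossing-interval argument is precisely what handles all $1\le r\le\infty$ at once --- the shrinking of the oscillation intervals supplied by $F\in L^p$ compensating for the lack of an $L^1$ majorant.
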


The convergence of the trajectories generated by the dynamical system \eqref{dysy} will be shown in the framework of the functions satisfying
the {\it Kurdyka-\L{}ojasiewicz property}. For $\eta\in(0,+\infty]$, we denote by $\Theta_{\eta}$
the class of concave and continuous functions $\varphi:[0,\eta)\rightarrow [0,+\infty)$ such that $\varphi(0)=0$, $\varphi$ is
continuously differentiable on $(0,\eta)$, continuous at $0$ and $\varphi'(s)>0$ for all
$s\in(0, \eta)$.

\begin{definition}\label{KL-property} \rm({\it Kurdyka-\L{}ojasiewicz property}) Let $f:\R^n\rightarrow \R$ be a differentiable function. We say that $f$ satisfies the {\it Kurdyka-\L{}ojasiewicz (KL) property} at
$\ol x\in \R^n$
if there exist $\eta \in(0,+\infty]$, a neighborhood $U$ of $\ol x$ and a function $\varphi\in \Theta_{\eta}$ such that for all $x$ in the
intersection
$$U\cap \{x\in\R^n: f(\ol x)<f(x)<f(\ol x)+\eta\}$$ the following inequality holds
$$\varphi'(f(x)-f(\ol x))\|\n f(x))\|\geq 1.$$
If $f$ satisfies the KL property at each point in $\R^n$, then $f$ is called a {\it KL function}.
\end{definition}

The function in the above definition is called a desingularizing function \cite{beg-bolt-jen}. The origins of this notion go back to the pioneering work of \L{}ojasiewicz \cite{lojasiewicz1963}, where it is proved that for a
real-analytic function $f:\R^n\rightarrow\R$ and a critical point $\ol x\in\R^n$ (that is $\nabla f(\ol x)=0$), there exists $\theta\in[1/2,1)$ such that the function
$|f-f(\ol x)|^{\theta}\|\nabla f\|^{-1}$ is bounded around $\ol x$. This corresponds to the situation when
$\varphi(s)=C(1-\theta)^{-1}s^{1-\theta}$. The result of \L{}ojasiewicz allows the interpretation of the KL property as a re-parametrization of the function values in order to avoid flatness around the
critical points. Kurdyka \cite{kurdyka1998} extended this property to differentiable functions definable in an o-minimal structure.
Further extensions to the nonsmooth setting can be found in \cite{b-d-l2006, att-b-red-soub2010, b-d-l-s2007, b-d-l-m2010}.

One of the remarkable properties of the KL functions is their ubiquity in applications, see \cite{b-sab-teb, BCL1,ALV-numa,L-mapr,L-jota}. To the class of KL functions belong semi-algebraic, real sub-analytic, semiconvex, uniformly convex and
convex functions satisfying a growth condition. We refer the reader to
\cite{b-d-l2006, att-b-red-soub2010, b-d-l-m2010, b-sab-teb, b-d-l-s2007, att-b-sv2013, attouch-bolte2009} and the references therein  for more details regarding all the classes mentioned above and illustrating examples.

An important role in our convergence analysis will be played by the following uniformized KL property given in \cite[Lemma 6]{b-sab-teb}.
We recall that the {\it distance function}
to a set is defined for $A\subseteq\R^n$ as $\dist(x,A)=\inf_{y\in A}\|x-y\|$ for all $x\in\R^n$.
\begin{lemma}\label{unif-KL-property} Let $\Omega\subseteq \R^n$ be a compact set and let $f:\R^n\rightarrow \R$ be a differentiable function. Assume that $f$ is constant on $\Omega$ and $f$ satisfies the KL property at each point of $\Omega$.
Then there exist $\varepsilon,\eta >0$ and $\varphi\in \Theta_{\eta}$ such that for all $\ol x\in\Omega$ and for all $x$ in the intersection
\begin{equation*} \{x\in\R^n: \dist(x,\Omega)<\varepsilon\}\cap \{x\in\R^n: f(\ol x)<f(x)<f(\ol x)+\eta\}\end{equation*}
the following inequality holds
\begin{equation*}\varphi'(f(x)-f(\ol x))\|\n f(x)\|\geq 1.\end{equation*}
\end{lemma}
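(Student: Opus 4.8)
The plan is to deduce the uniform statement from the pointwise KL property by a compactness argument, the key simplification being that $f$ is constant on $\Omega$. Write $a$ for the common value $f(\ol x)$, $\ol x\in\Omega$; then the value slice $\{x\in\R^n: a<f(x)<a+\eta\}$ appearing in the conclusion does \emph{not} depend on the chosen $\ol x\in\Omega$, which is exactly what makes a single triple $(\varepsilon,\eta,\varphi)$ possible. First I would invoke the KL property at each $\ol x\in\Omega$: there exist $\rho_{\ol x},\eta_{\ol x}>0$ and $\varphi_{\ol x}\in\Theta_{\eta_{\ol x}}$ so that $\varphi_{\ol x}'(f(x)-a)\,\|\n f(x)\|\ge 1$ whenever $\|x-\ol x\|<\rho_{\ol x}$ and $a<f(x)<a+\eta_{\ol x}$ (shrinking the neighborhood $U$ to an open ball if necessary). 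The balls $\{x:\|x-\ol x\|<\rho_{\ol x}/2\}_{\ol x\in\Omega}$ form an open cover of the compact set $\Omega$, so I extract a finite subcover centered at $\ol x_1,\dots,\ol x_p$ and set
$$\varepsilon:=\min_{1\le i\le p}\frac{\rho_i}{2},\qquad \eta:=\min_{1\le i\le p}\eta_i.$$

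Next I would build the common desingularizing function by summing the finitely many local ones after restricting them to $[0,\eta)$, that is, $\varphi:=\sum_{i=1}^{p}\varphi_i\big|_{[0,\eta)}$. Since each $\varphi_i\in\Theta_{\eta_i}$ and $\eta\le\eta_i$, each summand belongs to $\Theta_{\eta}$, and a finite sum of such functions is again concave, continuous on $[0,\eta)$, of class $C^1$ on $(0,\eta)$, vanishes at $0$, takes values in $[0,+\infty)$ and has strictly positive derivative; hence $\varphi\in\Theta_{\eta}$. The reason for taking a sum rather than, say, a maximum is the clean domination $\varphi'(s)=\sum_{j=1}^{p}\varphi_j'(s)\ge\varphi_i'(s)$ for every index $i$, which holds because each $\varphi_j'>0$ on $(0,\eta)$.

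It then remains to verify the uniform inequality on the prescribed intersection. Fix $x$ with $\dist(x,\Omega)<\varepsilon$ and $a<f(x)<a+\eta$. I choose $\ol y\in\Omega$ with $\|x-\ol y\|<\varepsilon$ and pick an index $i$ with $\|\ol y-\ol x_i\|<\rho_i/2$ (such $i$ exists since the half-radius balls cover $\Omega$). The triangle inequality then gives $\|x-\ol x_i\|\le\|x-\ol y\|+\|\ol y-\ol x_i\|<\varepsilon+\rho_i/2\le\rho_i$, so $x$ lies in the KL neighborhood of $\ol x_i$. As $a<f(x)<a+\eta\le a+\eta_i$, the local KL inequality applies and yields $\varphi_i'(f(x)-a)\,\|\n f(x)\|\ge 1$; combining this with the domination $\varphi'\ge\varphi_i'$ established above upgrades it to $\varphi'(f(x)-a)\,\|\n f(x)\|\ge 1$, which is the desired conclusion.

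The single genuinely delicate point is the passage from \emph{``$x$ is near $\Omega$''} to \emph{``$x$ lies in one of the finitely many local KL neighborhoods''}: a point close to $\Omega$ need not be close to any particular center $\ol x_i$. This is precisely why I cover $\Omega$ with half-radius balls and set $\varepsilon=\min_i\rho_i/2$, so that the triangle inequality closes the gap. Everything else—checking $\varphi\in\Theta_{\eta}$ and the domination of each $\varphi_i'$ by $\varphi'$—is routine bookkeeping, and the constancy of $f$ on $\Omega$ removes any dependence of the value slice on $\ol x$, so no further compatibility between the local thresholds $\eta_i$ is needed beyond taking their minimum.
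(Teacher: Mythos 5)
Your proof is correct. Note that the paper itself offers no proof of this lemma---it is recalled verbatim from \cite[Lemma 6]{b-sab-teb}---and your argument (finite subcover of $\Omega$ by half-radius balls with $\varepsilon=\min_i\rho_i/2$, $\eta=\min_i\eta_i$, the common desingularizing function taken as the sum $\varphi=\sum_i\varphi_i$ so that $\varphi'\ge\varphi_i'$, and the constancy of $f$ on $\Omega$ making the value slice independent of the base point) is precisely the standard compactness proof given in that reference.
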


A related notion that we need is the notion of a {\L}ojasiewicz exponent, which is defined   as follows, see \cite{att-b-red-soub2010,att-b-sv2013,LiP}.

\begin{definition}\label{Lexpo}  Let $f:\R^n\To \R $ be a differentiable function. The function $f$ is said to fulfill the {\L}ojasiewicz property, if for every $\ol x\in\crit{f}$ there exist $K,\e>0$ and $\t\in(0,1)$ such that
$$|f(x)-f(\ol x)|^\t\le K\|\n f(x)\|\mbox{ for every }x\mbox{ fulfilling }\|x-\ol x\|<\e.$$

The number $\t$  is called the {\L}ojasiewicz exponent of $f$ at the critical  point $\ol x.$
\end{definition}

\section{Existence and uniqueness}

Let us specifying which type of solutions are we considering in the analysis of the dynamical system \eqref{dysy}.

\begin{definition}\label{str-sol}\rm We say that $x:[t_0,+\infty)\rightarrow \R^n$ is a strong global solution of \eqref{dysy}, if the
following properties are satisfied:

(i) $x,\dot x,\ddot x:[t_0,+\infty)\rightarrow \R^n$ are locally absolutely continuous, in other words, absolutely continuous on each interval $[0,b]$ for $0<b<+\infty$;

(ii) $\frac{\l}{t^2}\ddot{x}(t)+\frac{\g}{t}\dot{x}(t)+\nabla g(x(t))=0$ for almost every $t\geq t_0$;

(iii) $x(t_0)=u_0$ and $\dot{x}(t_0)=v_0$.
\end{definition}

For proving existence and uniqueness of the strong global solutions of \eqref{dysy}, we use the Cauchy-Lipschitz-Picard Theorem for absolutely continues trajectories (see for example
\cite[Proposition 6.2.1]{haraux}, \cite[Theorem 54]{sontag}). The key argument is that one can rewrite \eqref{dysy} as a particular first order dynamical system in a suitably chosen product space (see also \cite{alv-att-bolte-red}).

\begin{theorem}\label{uniq} Assume that $\l,\g>0$. Then, for every starting points $u_0,v_0\in \R^n$ there exists a unique strong global solution $x$ of the dynamical system \eqref{dysy}.
\end{theorem}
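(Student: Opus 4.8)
The plan is to follow the route indicated before the statement: recast \eqref{dysy} as a first order system on the product space $\R^n\times\R^n$ and then apply the Cauchy-Lipschitz-Picard theorem for absolutely continuous trajectories. First I would introduce the variable $X(t)=(x(t),\dot x(t))$, so that solving the governing equation for $\ddot x(t)$ turns the second order equation into $\dot X(t)=F(t,X(t))$ with
$$F:[t_0,+\infty)\times(\R^n\times\R^n)\To\R^n\times\R^n,\quad F(t,(u,v))=\left(v,\,-\frac{\g t}{\l}v-\frac{t^2}{\l}\n g(u)\right).$$
The initial datum becomes $X(t_0)=(u_0,v_0)$, and a strong global solution of \eqref{dysy} in the sense of Definition \ref{str-sol} corresponds exactly to a locally absolutely continuous solution of this first order problem.

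Next I would verify the hypotheses of the Cauchy-Lipschitz-Picard theorem. For fixed $(u,v)$ the map $t\mapsto F(t,(u,v))$ is continuous, hence measurable and integrable on each compact interval. For fixed $t$ the map $(u,v)\mapsto F(t,(u,v))$ is Lipschitz continuous: since $\n g$ is $L_g$-Lipschitz, one obtains
$$\|F(t,(u_1,v_1))-F(t,(u_2,v_2))\|\le \left(1+\frac{\g t}{\l}\right)\|v_1-v_2\|+\frac{t^2 L_g}{\l}\|u_1-u_2\|,$$
so the Lipschitz constant $L(t)=\max\{1+\g t/\l,\,t^2 L_g/\l\}$ is continuous in $t$, hence integrable on every compact interval. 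This yields a unique maximal local solution.

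The remaining point is to promote this maximal solution to a global one on all of $[t_0,+\infty)$. Here I would establish an a priori bound ruling out blow-up in finite time: using $\|\n g(u)\|\le L_g\|u\|+\|\n g(0)\|$ one sees that $F$ satisfies a linear growth estimate $\|F(t,X)\|\le a(t)+b(t)\|X\|$ with $a,b$ continuous, hence locally integrable, and a Gronwall argument on any interval $[t_0,T]$ bounds $\|X(t)\|$. Consequently the maximal interval of existence cannot be bounded, so the solution is global and unique. Finally, for the regularity demanded in Definition \ref{str-sol}, the first order formulation gives $x,\dot x$ locally absolutely continuous, while rewriting $\ddot x(t)=-\frac{\g t}{\l}\dot x(t)-\frac{t^2}{\l}\n g(x(t))$ exhibits $\ddot x$ as a sum of products of smooth coefficients with locally absolutely continuous maps, using Remark \ref{rem-abs-cont}(b) for $\n g\circ x$; hence $\ddot x$ is locally absolutely continuous as well.

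The conceptual steps are all standard; the only mild subtlety, and the point I would be most careful about, is the unbounded time dependence. The coefficients $\g t/\l$ and $t^2/\l$ blow up as $t\to+\infty$, so the Lipschitz constant $L(t)$ and the growth coefficient $b(t)$ are \emph{not} bounded on $[t_0,+\infty)$. This is harmless, because both the Cauchy-Lipschitz-Picard theorem and the Gronwall estimate require only local (i.e. on each compact interval) integrability of these quantities, which holds by continuity. It does mean, however, that one cannot invoke a single global-in-time Lipschitz bound or a global contraction on $[t_0,+\infty)$, and that the globalness of the solution must instead be argued through the finite-time a priori estimate.
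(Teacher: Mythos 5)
Your proposal is correct, and its core is the same as the paper's: rewrite \eqref{dysy} as the first order system $\dot X(t)=F(t,X(t))$, $X(t)=(x(t),\dot x(t))$, with exactly the same vector field $F$, verify that $F(t,\cdot,\cdot)$ is Lipschitz with a continuous (hence locally integrable) constant $L(t)$ and that $F(\cdot,u,v)\in L^1_{loc}$, and invoke the Cauchy-Lipschitz-Picard theorem for absolutely continuous trajectories. The one place you diverge is the globalization step: you obtain a maximal local solution and then rule out finite-time blow-up by a linear-growth estimate plus Gronwall. This is sound, but it is redundant given the version of Cauchy-Lipschitz-Picard the paper cites (Haraux, Proposition 6.2.1; Sontag, Theorem 54): because your Lipschitz constant $L(t)$, although unbounded as $t\to+\infty$, is \emph{uniform in the space variable} $(u,v)$, that theorem already delivers a unique solution on all of $[t_0,+\infty)$ in one stroke --- indeed the global-in-space Lipschitz bound by itself forces the linear growth $\|F(t,X)\|\le\|F(t,X_0)\|+L(t)\|X-X_0\|$, so your Gronwall argument is implicitly built into the cited result. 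Your closing remark that the unboundedness of $\gamma t/\lambda$ and $t^2/\lambda$ in $t$ is harmless because only local integrability of $L(t)$ is needed is precisely the point of the paper's computation of $L(t)=\sqrt{1+2(\gamma^2t^2+L_g^2t^4)/\lambda^2}$. Finally, your explicit verification that $\ddot x$ is locally absolutely continuous (by reading it off from the equation and using Remark \ref{rem-abs-cont}(b) for $\nabla g\circ x$) is a detail the paper handles only implicitly, in the remark following Theorem \ref{uniq}; including it is a small improvement rather than a deviation.
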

\begin{proof}

By making use of the notation $X(t)=(x(t),\dot{x}(t))$ the system \eqref{dysy} can be rewritten as a first order dynamical system.
\begin{equation}\label{dysy4}
\left\{
\begin{array}{ll}
\dot{X}(t)=F(t,X(t))\\
X(s_0)=(u_0,v_0),
\end{array}
\right.
\end{equation}
where $F:[t_0,+\infty)\times \R^n\times\R^n\To \R^n\times\R^n,\, F(t,u,v)=\left(v, -\frac{\g t}{\l}v-\frac{t^2}{\l}\nabla g(u)\big)\right).$
First we show that $F(t,\cdot,\cdot)$ is Lipschitz continuous with a Lipschitz constant $L(t)\in L^1_{loc}([t_0,+\infty))$, for every $t\geq t_0$.

Indeed,
$$\|F(t,u,v)-F(t,\ol u,\ol v)\|=\sqrt{\|v-\ol v\|^2+\left\|\frac{\g t}{\l}(\ol v-v)+\frac{t^2}{\l}(\nabla g(\ol u)-\nabla g(u))\right\|^2}\le$$
$$\sqrt{\left(1+2\frac{\g^2 t^2}{\l^2}\right)\|v-\ol v\|^2+\frac{2L_g^2 t^4}{\l^2}\|u-\ol u\|^2}\le$$
$$\sqrt{1+2\frac{\g^2 t^2+L_g^2 t^4}{\l^2}}\sqrt{\|v-\ol v\|^2+\|u-\ol u\|^2}=$$
$$\sqrt{1+2\frac{\g^2 t^2+L_g^2 t^4}{\l^2}}\|(u,v)-(\ol u,\ol v)\|.$$

Obviously, the Lipschitz constant $L(t)=\sqrt{1+2\frac{\g^2 t^2+L_g^2 t^4}{\l^2}}$ is continuous, hence integrable on $[t_0,b]$ for all $0<b<+\infty$, consequently $L(t)\in L^1_{loc}([t_0,+\infty))$.

Next we show that $F(\cdot,u,v)\in L^1_{loc}([t_0,+\infty),{\R^n}\times {\R^n})$ for all $u,v\in {\R^n}.$

Indeed, for $0<b<+\infty$ and $u,v\in {\R^n}$ fixed one has
$$\int_{t_0}^b\|F(t,u,v)\|ds=\int_{t_0}^b \sqrt{\|v\|^2+\left\|\frac{\g t}{\l}v-\frac{t^2}{\l}\nabla g(u)\right\|^2}dt\le$$
$$\int_{t_0}^b \sqrt{\left(1+2\frac{\g^2 t^2}{\l^2}\right)\|v\|^2+2\frac{t^4}{\l^2}\|\nabla g(u)\|^2}dt\le$$
$$\sqrt{\|v\|^2+\|\nabla g(u)\|^2}\int_{t_0}^b \sqrt{1+2\frac{\g^2 t^2+t^4}{\l^2}}dt,$$
and the conclusion follows by the continuity of $\sqrt{1+2\frac{\g^2 t^2+t^4}{\l^2}}.$

The conclusion of the theorem follows by applying the Cauchy-Lipschitz-Picard theorem to the first order dynamical system \eqref{dysy4}.
\end{proof}

Considering again the setting of the proof of Theorem \ref{uniq}, from Remark \ref{rem-abs-cont}(b)
it follows that $\ddot{X}$ exists almost everywhere on $[t_0,+\infty).$ We have the following result.

\begin{lemma}\label{thirdderiv11} Consider $u_0,v_0\in\R^n$ and let $x$ be the unique strong global solution of \eqref{dysy}. Then, for almost every $t\in[t_0,+\infty)$ one has:
\begin{align}\label{third-deriv11}
\|x^{(3)}(t)\|\le \left(3\frac{\g}{\l}+\frac{L_g t^2}{\l}\right)\|\dot{x}(t)\|+
\left(\frac{\g}{\l}t+\frac{2}{t}\right)\|\ddot{x}(t)\|.
\end{align}
\end{lemma}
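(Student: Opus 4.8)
The plan is to differentiate the governing equation of \eqref{dysy} one more time in $t$ and then control the resulting terms. First I would solve the equation for the second derivative, writing
$$\ddot{x}(t)=-\frac{\g t}{\l}\dot{x}(t)-\frac{t^2}{\l}\nabla g(x(t)).$$
The discussion following Theorem \ref{uniq} already guarantees that $\ddot X$, and hence $\ddot x$, is locally absolutely continuous, so $x^{(3)}$ exists for almost every $t\in[t_0,+\infty)$ and I may differentiate the right-hand side a.e. by the product rule, obtaining
$$x^{(3)}(t)=-\frac{\g}{\l}\dot{x}(t)-\frac{\g t}{\l}\ddot{x}(t)-\frac{2t}{\l}\nabla g(x(t))-\frac{t^2}{\l}\frac{d}{dt}\nabla g(x(t)).$$

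The key regularity input concerns the term $\frac{d}{dt}\nabla g(x(t))$, and I would handle it without assuming $g\in C^2$. Since $\nabla g$ is $L_g$-Lipschitz and $x$ is continuously differentiable (in particular locally absolutely continuous), Remark \ref{rem-abs-cont}(b) applies to the composition $t\mapsto\nabla g(x(t))$: it is locally absolutely continuous, differentiable almost everywhere, and satisfies the pointwise estimate $\big\|\tfrac{d}{dt}\nabla g(x(t))\big\|\le L_g\|\dot x(t)\|$ a.e. This is precisely the reason that remark was recorded, and it is the only genuine subtlety in the argument. Next, to remove the explicit gradient term I would substitute back the value $\nabla g(x(t))=-\frac{\l}{t^2}\ddot{x}(t)-\frac{\g}{t}\dot{x}(t)$ read off from \eqref{dysy}, which converts $-\frac{2t}{\l}\nabla g(x(t))$ into $\frac{2}{t}\ddot{x}(t)+\frac{2\g}{\l}\dot{x}(t)$, so that
$$x^{(3)}(t)=-\frac{\g}{\l}\dot{x}(t)+\frac{2\g}{\l}\dot{x}(t)-\frac{\g t}{\l}\ddot{x}(t)+\frac{2}{t}\ddot{x}(t)-\frac{t^2}{\l}\frac{d}{dt}\nabla g(x(t)).$$

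Finally I would take norms and apply the triangle inequality term by term. The crucial bookkeeping point is that I do \emph{not} first cancel the two $\dot x$ contributions $-\frac{\g}{\l}\dot x$ and $+\frac{2\g}{\l}\dot x$; bounding them separately and adding the Lipschitz estimate $\frac{t^2}{\l}L_g\|\dot x(t)\|$ for the last term produces the coefficient $\frac{\g}{\l}+\frac{2\g}{\l}+\frac{L_g t^2}{\l}=3\frac{\g}{\l}+\frac{L_g t^2}{\l}$ in front of $\|\dot x(t)\|$, while the two $\ddot x$ terms give $\frac{\g}{\l}t+\frac{2}{t}$ in front of $\|\ddot x(t)\|$. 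This is exactly the claimed inequality \eqref{third-deriv11}. Thus the only real work is the a.e. differentiation of $t\mapsto\nabla g(x(t))$ with its Lipschitz bound via Remark \ref{rem-abs-cont}(b); the remainder is the product rule, one substitution from \eqref{dysy}, and a termwise triangle inequality.
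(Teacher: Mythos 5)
Your proposal is correct and is essentially the paper's argument in different packaging: both proofs differentiate the governing equation, bound $\bigl\|\tfrac{d}{dt}\nabla g(x(t))\bigr\|$ by $L_g\|\dot x(t)\|$ (you invoke Remark \ref{rem-abs-cont}(b) directly, while the paper re-derives the same Lipschitz estimate by bounding difference quotients of the first-order system $\dot X(t)=F(t,X(t))$ in the product $1$-norm and letting $h\to 0$, thereby never writing an exact a.e.\ formula for $x^{(3)}$), then substitute $\nabla g(x(t))=-\frac{\l}{t^2}\ddot x(t)-\frac{\g}{t}\dot x(t)$ back from \eqref{dysy} and finish with a termwise triangle inequality, arriving at identical coefficients. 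One cosmetic remark: your insistence on \emph{not} cancelling the two $\dot x$ contributions is unnecessary for correctness---cancelling would give the sharper coefficient $\frac{\g}{\l}+\frac{L_g t^2}{\l}$, which still implies \eqref{third-deriv11}---it merely reproduces the paper's stated constant $3\frac{\g}{\l}$, which in the original proof arises from bounding $\frac{\g}{\l}\|\dot x(t)+t\ddot x(t)\|$ and $\frac{2t}{\l}\|\nabla g(x(t))\|$ separately.
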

\begin{proof}
By using the 1-norm of $\R^n\times \R^n$ we have,

$$\|\dot{X}(t+h)-\dot{X}(t)\|=\|F(t+h,X(t+h))-F(t,X(t))\|=$$
$$\left\|(\dot{x}(t+h)-\dot{x}(t),-\frac{\g}{\l}(t+h)\dot{x}(t+h)+\frac{\g}{\l}t\dot{x}(t)-\frac{(t+h)^2}{\l}\nabla g(x(t+h))+\frac{t^2}{\l}\nabla g(x(t)))\right\|=$$
$$\|\dot{x}(t+h)-\dot{x}(t)\|+\left\|-\frac{\g}{\l}(t+h)\dot{x}(t+h)+\frac{\g}{\l}t\dot{x}(t)-\frac{(t+h)^2}{\l}\nabla g(x(t+h))+\frac{t^2}{\l}\nabla g(x(t))\right\|\le$$
$$\|\dot{x}(t+h)-\dot{x}(t)\|+\frac{\g}{\l}\left\|(t+h)\dot{x}(t+h)-t\dot{x}(t)\right\|+\frac{1}{\l}\left\|(t+h)^2\nabla g(x(t+h))-t^2\nabla g(x(t))\right\|\le$$
$$\|\dot{x}(t+h)-\dot{x}(t)\|+\frac{\g}{\l}\left\|(t+h)\dot{x}(t+h)-t\dot{x}(t)\right\|+\frac{1}{\l}\left\|((t+h)^2-t^2)\nabla g(x(t+h))\right\|+$$
$$\frac{t^2}{\l}\left\|\nabla g(x(t+h))-\nabla g(x(t))\right\|\le$$
$$\|\dot{x}(t+h)-\dot{x}(t)\|+\frac{\g}{\l}\left\|(t+h)\dot{x}(t+h)-t\dot{x}(t)\right\|+\frac{1}{\l}\left\|((t+h)^2-t^2)\nabla g(x(t+h))\right\|+$$
$$\frac{L_g t^2}{\l}\left\|x(t+h)-x(t)\right\|.$$

Hence,
$$\left\|\frac{\dot{X}(t+h)-\dot{X}(t)}{h}\right\|\le$$
$$\left\|\frac{\dot{x}(t+h)-\dot{x}(t)}{h}\right\|+\frac{\g}{\l}\left\|\frac{(t+h)\dot{x}(t+h)-t\dot{x}(t)}{h}\right\|+\frac{1}{\l}\left\|\frac{(t+h)^2-t^2}{h}\nabla g(x(t+h))\right\|+$$
$$\frac{L_g t^2}{\l}\left\|\frac{x(t+h)-x(t)}{h}\right\|.$$

By taking the limit $h\To 0$ we obtain

$$\|\ddot{X}(t)\|\le\|\ddot{x}(t)\|+\frac{\g}{\l}\left\|\left(t\dot{x}(t)\right)'\right\|+\frac{2t}{\l}\|\nabla g(x(t))\|+\frac{L_g t^2}{\l}\|\dot{x}(t)\|.$$

Now $\|\ddot{X}(t)\|=\|x^{(3)}(t)\|+\|\ddot{x}(t)\|$ and by using \eqref{dysy} we get $\nabla g(x(t))=-\frac{\l}{t^2}\ddot{x}(t)-\frac{\g}{t}\dot{x}(t)$, consequently
$$
\|x^{(3)}(t)\|\le \frac{\g}{\l}\left\|\dot{x}(t)+t\ddot{x}(t)\right\|+\frac{2t}{\l}\left\|\frac{\l}{t^2}\ddot{x}(t)+\frac{\g}{t}\dot{x}(t)\right\|+\frac{L_g t^2}{\l}\|\dot{x}(t)\|.
$$

Hence,

$$
\|x^{(3)}(t)\|\le \left(3\frac{\g}{\l}+\frac{L_g t^2}{\l}\right)\|\dot{x}(t)\|+
\left(\frac2t+\frac{\g}{\l}t\right)\|\ddot{x}(t)\|.$$
\end{proof}

\section{Convergence of trajectories}

In this section we study thee convergence of the trajectories of the dynamical system \eqref{dysy}. 

The set of limit points of the trajectory $x,$ which we denote by  $\omega(x)$, is defined as
$$\omega(x):=\{\ol{x}\in\R^n:\exists t_k\To\infty\mbox{ such that }x(t_k)\To\ol{x},\,k\To + \infty\}.$$

\begin{theorem}\label{abstr}
 Assume that $g$ is bounded from below and for $u_0,v_0\in\R^n$ let $x$ be the unique strong global solution of  the dynamical system \eqref{dysy}.

Then

\begin{itemize}
\item[(i)] $\frac{1}{\sqrt{t^3}}\ddot{x}(t),\,\frac{1}{\sqrt{t}}\dot{x}(t)\in L^2([t_0,+\infty),\R^n)$;
\item[(ii)] there exists and is finite the limit $\lim_{t\To+\infty}g(x(t))$;
\item[(iii)] $\lim_{t\To+\infty}\frac{1}{t^2}\ddot{x}(t)=0,$ and $\lim_{t\To+\infty}\frac{1}{t}\dot{x}(t)=0$;
\item[(iv)] $\omega(x)\subseteq \crit(g).$
\end{itemize}
\end{theorem}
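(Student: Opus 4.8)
The plan is to base everything on the Lyapunov energy
$$E(t)=g(x(t))+\frac{\l}{2t^2}\|\dot{x}(t)\|^2.$$
Differentiating and inserting $\frac{\l}{t^2}\ddot{x}(t)=-\frac{\g}{t}\dot{x}(t)-\n g(x(t))$ from \eqref{dysy}, the cross term $\langle\n g(x(t)),\dot{x}(t)\rangle$ cancels and one is left with
$$\frac{d}{dt}E(t)=-\left(\frac{\g}{t}+\frac{\l}{t^3}\right)\|\dot{x}(t)\|^2\le 0.$$
Since $g$ is bounded below, $E(t)\ge\inf g$, so $E$ is non-increasing and bounded below; hence $\lim_{t\To+\infty}E(t)$ exists in $\R$, and integrating the identity over $[t_0,+\infty)$ gives $\int_{t_0}^{+\infty}\left(\frac{\g}{t}+\frac{\l}{t^3}\right)\|\dot{x}(t)\|^2\,dt<+\infty$. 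The summand $\frac{\g}{t}\|\dot{x}\|^2$ already yields $\frac{1}{\sqrt{t}}\dot{x}\in L^2([t_0,+\infty),\R^n)$, which is half of (i).

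The remaining, and genuinely hard, part of (i) is the bound on $\ddot{x}$. Writing the equation as $\ddot{x}(t)=-\frac{\g t}{\l}\dot{x}(t)-\frac{t^2}{\l}\n g(x(t))$ and squaring gives
$$\frac{1}{t^3}\|\ddot{x}(t)\|^2\le\frac{2\g^2}{\l^2}\cdot\frac{1}{t}\|\dot{x}(t)\|^2+\frac{2}{\l^2}\,t\|\n g(x(t))\|^2,$$
so the claim $\frac{1}{\sqrt{t^3}}\ddot{x}\in L^2$ reduces to proving $\int_{t_0}^{+\infty}t\|\n g(x(t))\|^2\,dt<+\infty$. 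To get this I would test \eqref{dysy} with $t^2\n g(x(t))$, which produces $t\|\n g\|^2=-\frac{\l}{t}\langle\ddot{x},\n g\rangle-\g\frac{d}{dt}g(x(t))$, and then integrate. The term $\frac{d}{dt}g(x(t))$ integrates to a bounded quantity because $\inf g\le g(x(t))\le E(t_0)$, and $\langle\ddot{x},\n g\rangle$ is turned into $\frac{d}{dt}\langle\dot{x},\n g\rangle-\langle\dot{x},\frac{d}{dt}\n g(x(\cdot))\rangle$; here Remark \ref{rem-abs-cont}(b) is decisive, since it provides $\|\frac{d}{dt}\n g(x(t))\|\le L_g\|\dot{x}(t)\|$ almost everywhere, so this cross term is dominated by $\int\frac{1}{t}\|\dot{x}\|^2<+\infty$. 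The main obstacle is precisely the bookkeeping of the boundary and lower-order terms left after integration by parts: one meets a contribution of the form $\int\frac{1}{t^2}\|\n g\|^2$, which on $[T_1,+\infty)$ satisfies $\frac{1}{t^2}\|\n g\|^2\le\frac{1}{T_1^3}\,t\|\n g\|^2$ and can be absorbed into the left-hand side for $T_1$ large, the compact piece $[t_0,T_1]$ being finite by continuity. This closes (i).

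Once (i) holds, (ii) is immediate from Lemma \ref{fejer-cont1} applied to $t\mapsto g(x(t))$: this map is bounded below and
$$\frac{d}{dt}g(x(t))=\langle\n g(x(t)),\dot{x}(t)\rangle\le\frac12\,t\|\n g(x(t))\|^2+\frac{1}{2t}\|\dot{x}(t)\|^2=:G(t)\in L^1([t_0,+\infty)),$$
so $\lim_{t\To+\infty}g(x(t))$ exists and is finite. For (iii) I would apply Lemma \ref{fejer-cont2} twice. With $F(t)=\frac{1}{t^2}\|\dot{x}(t)\|^2\in L^1$, discarding the negative term in $F'$ and applying Young to $\frac{2}{t^2}\langle\dot{x},\ddot{x}\rangle$ gives $\frac{d}{dt}F(t)\le\frac{1}{t}\|\dot{x}\|^2+\frac{1}{t^3}\|\ddot{x}\|^2\in L^1$, hence $F(t)\To 0$, i.e. $\frac{1}{t}\dot{x}(t)\To 0$; with $F(t)=\|\n g(x(t))\|^2\in L^1$ and $\frac{d}{dt}F(t)\le L_g\left(t\|\n g\|^2+\frac{1}{t}\|\dot{x}\|^2\right)\in L^1$ one obtains $\n g(x(t))\To 0$. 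The identity $\frac{1}{t^2}\ddot{x}(t)=-\frac{\g}{\l t}\dot{x}(t)-\frac{1}{\l}\n g(x(t))$ then forces $\frac{1}{t^2}\ddot{x}(t)\To 0$, finishing (iii).

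Finally (iv) is a soft consequence: if $\ol{x}\in\omega(x)$, say $x(t_k)\To\ol{x}$, then continuity of $\n g$ together with $\n g(x(t))\To 0$ yields $\n g(\ol{x})=\lim_k\n g(x(t_k))=0$, so $\ol{x}\in\crit(g)$. The only delicate step in the whole argument is the weighted estimate $\int_{t_0}^{+\infty}t\|\n g(x(t))\|^2\,dt<+\infty$ underlying the $\ddot{x}$ bound in (i); every other assertion follows softly from the energy decay and from the two auxiliary Lemmas \ref{fejer-cont1} and \ref{fejer-cont2}.
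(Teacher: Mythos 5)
Your energy $E(t)=g(x(t))+\frac{\l}{2t^2}\|\dot x(t)\|^2$ and the identity $\frac{d}{dt}E(t)=-\left(\frac{\g}{t}+\frac{\l}{t^3}\right)\|\dot x(t)\|^2$ are correct, and they do give $\frac{1}{\sqrt{t}}\dot x\in L^2([t_0,+\infty),\R^n)$; your reduction of the $\ddot x$-half of (i) to $\int_{t_0}^{+\infty}t\|\n g(x(t))\|^2dt<+\infty$ is also correct, and that target is true (it is equivalent to the paper's (i) through the equation). The gap is in how you close this weighted estimate. After multiplying the equation by $t\,\n g(x(t))$ (incidentally, your identity corresponds to the weight $t$, not $t^2$, but the identity itself is right) and integrating by parts, you are left with the boundary term $-\frac{\l}{T}\langle\dot x(T),\n g(x(T))\rangle$, and this is not mere bookkeeping: it is a pointwise quantity, so it cannot be absorbed into the left-hand integral the way $\int\frac{1}{t^2}\|\n g(x(t))\|^2dt$ can, and the available a priori bounds do not make it bounded. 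Indeed, the energy decay only gives $\|\dot x(T)\|\le CT$ (from $\frac{\l}{2T^2}\|\dot x(T)\|^2\le E(t_0)-\inf g$), while $\|\n g(x(T))\|\le\|\n g(x(t_0))\|+L_g\int_{t_0}^T\|\dot x(s)\|ds\le C(1+T)$ by Cauchy--Schwarz from $\frac{1}{\sqrt{t}}\dot x\in L^2$; so the boundary term is a priori of order $O(T)$, and your inequality, as written, does not yield a bound on $\int_{t_0}^T t\|\n g(x(t))\|^2dt$ uniform in $T$.

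The hole is reparable with one extra idea: $\int_{t_0}^{+\infty}\frac1t\|\dot x(t)\|^2dt<+\infty$ forces $\liminf_{t\To+\infty}\|\dot x(t)\|=0$, so one can choose $T_k\To+\infty$ with $\dot x(T_k)\To0$; then $\frac{1}{T_k}\|\dot x(T_k)\|\,\|\n g(x(T_k))\|\le C\frac{1+T_k}{T_k}\|\dot x(T_k)\|\To0$, and since $T\mapsto\int_{t_0}^Tt\|\n g(x(t))\|^2dt$ is nondecreasing, the uniform bound along $T_k$ suffices. With that patch the rest of your argument goes through, and your route differs genuinely from the paper's: the paper works with the single perturbed energy $t\mapsto g\left(x(t)+\frac{\b}{t}\dot x(t)\right)+\frac12\left(\frac{\b\g+\l}{t^2}-\frac{\b\l}{t^4}\right)\|\dot x(t)\|^2$ with $0<\b<\min\left(\frac{2\g}{3L_g},\sqrt{2\l/L_g}\right)$, whose derivative is bounded by $A(t)\|\ddot x(t)\|^2+B(t)\|\dot x(t)\|^2$ with $A(t)\sim-1/t^3$ and $B(t)\sim-1/t$, so both halves of (i) fall out of one integration with no integration by parts and no boundary issue; moreover that differential inequality \eqref{e11} is precisely what Lemma \ref{reg} and the KL arguments of Theorems \ref{convergence} and \ref{th-conv-rate} reuse, so your route, even patched, would have to be supplemented to serve the rest of the paper. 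On the other hand, your proofs of (ii)--(iv) are sound given (i), and your derivation of $\frac{1}{t^2}\ddot x(t)\To0$ via $\|\n g(x(t))\|\To0$ and the equation avoids the third-derivative estimate of Lemma \ref{thirdderiv11} that the paper invokes --- that part is an honest simplification.
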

 \begin{proof}
Let $T>0.$ Since $x,\dot{x},\ddot{x}$ are locally absolutely continuous we have $x,\dot{x},\ddot{x}\in L^2([0,T],\R^n).$ Further, by the $L_g$-Lipschitz property of $\n g$ we have $\n (g(x))\in L^2([0,T],\R^n).$ 

Consider $\b$ such that  $0<\b<\min\left(\frac{2\g}{3L_g},\sqrt{\frac{2\l}{L_g}}\right).$
By using the $L_g$ Lipschitz continuity of $\n g$ we evaluate:
\begin{align*}
\frac{d}{dt} g\left(\frac{\b}{t}\dot{x}(t)+x(t)\right)&=
\left\<\frac{\b}{t}\ddot{x}(t)+\left(1-\frac{\b}{t^2}\right)\dot{x}(t),\n g\left(\frac{\b}{t}\dot{x}(t)+x(t)\right)-\n g(x(t))\right\>\\
&+\left\<\frac{\b}{t}\ddot{x}(t)+\left(1-\frac{\b}{t^2}\right)\dot{x}(t),-\frac{\l}{t^2}\ddot{x}(s)-\frac{\g}{t}\dot{x}(t)\right\>\\
&\le
-\frac{\b\l}{t^3}\|\ddot{x}(t)\|^2-\left(\frac{\b\g+\l}{t^2}-\frac{\b\l}{t^4}\right)\<\ddot{x}(t),\dot{x}(t)\>-\left(\frac{\g}{t}-
\frac{\g\b}{t^3}\right)\|\dot{x}(t)\|^2\\
&+L_g\left\|\frac{\b}{t}\ddot{x}(t)+\left(1-\frac{\b}{t^2}\right)\dot{x}(t)\right\|\left\|\frac{\b}{t}\dot{x}(t)\right\|.
\end{align*}

Further by using  the inequality $\|\dot{x}(t)\|\|\ddot{x}(t)\|\le\frac12\left(\|\dot{x}(t)\|^2+\|\ddot{x}(t)\|^2\right)$ we have:
\begin{align*}
\left\|\frac{\b}{t}\ddot{x}(t)+\left(1-\frac{\b}{t^2}\right)\dot{x}(t)\right\|\left\|\frac{\b}{t}\dot{x}(t)\right\|&\le
\left\|\frac{\b}{t}\ddot{x}(t)\right\|\left\|\frac{\b}{t}\dot{x}(t)\right\|+\left|1-\frac{\b}{t^2}\right|\frac{\b}{t}\|\dot{x}(t)\|^2\\
&=\left\|\sqrt{\left(\frac{\b}{t}\right)^3}\ddot{x}(t)\right\|\left\|\sqrt{\frac{\b}{t}}\dot{x}(t)\right\|+\left|1-\frac{\b}{t^2}\right|
\frac{\b}{t}\|\dot{x}(t)\|^2\\
&\le\frac12\left(\left(\frac{\b}{t}\right)^3\|\ddot{x}(t)\|^2+\frac{\b}{t}\|\dot{x}(t)\|^2\right)+\left|1-\frac{\b}{t^2}\right|
\frac{\b}{t}\|\dot{x}(t)\|^2.
\end{align*}

On the other hand
\begin{align*}-\left(\frac{\b\g+\l}{t^2}-\frac{\b\l}{t^4}\right)\<\ddot{x}(t),\dot{x}(t)\>&=
-\frac12\frac{d}{dt}\left(\left(\frac{\b\g+\l}{t^2}-\frac{\b\l}{t^4}\right)\|\dot{x}(t)\|^2\right)\\
&-\left(\frac{\l+\b\g}{t^3}-\frac{2\b\l}{t^5}\right)\|\dot{x}(t)\|^2.
\end{align*}

Consequently, one has:

\begin{align}\label{e11}
\frac{d}{dt} \left(g\left(\frac{\b}{t}\dot{x}(t)+x(t)\right)+\frac12\left(\frac{\b\g+\l}{t^2}-\frac{\b\l}{t^4}\right)\|\dot{x}(t)\|^2\right)\le
\end{align}
$$\left(-\frac{\b\l}{t^3}+\frac{L_g\b^3}{2t^3}\right)\|\ddot{x}(t)\|^2+\left(-\frac{\g}{t}+\frac{L_g\b}{2t}+L_g\left|1-\frac{\b}{t^2}\right|
\frac{\b}{t}-\frac{\l}{t^3}+\frac{2\b\l}{t^5}\right)\|\dot{x}(t)\|^2.$$
Let us denote
$$A(t)=-\frac{\b\l}{t^3}+\frac{L_g\b^3}{2t^3}$$
and
$$B(t)=-\frac{\g}{t}+\frac{L_g\b}{2t}+L_g\left|1-\frac{\b}{t^2}\right|\frac{\b}{t}-\frac{\l}{t^3}+\frac{2\b\l}{t^5}.$$

Since $0<\b<\min\left(\frac{2\g}{3L_g},\sqrt{\frac{2\l}{L_g}}\right)$ we have that there exists $t_1\in\R$ such that for all $t>t_1$ one has
$$\frac{\b\g+\l}{t^2}-\frac{\b\l}{t^4}>0,\,A(t)<0\mbox{ and }B(t)<0.$$
 Now by integrating \eqref{e11} on  $[t_1,T]$ one obtains:

\begin{align}\label{e12} g\left(\frac{\b}{T}\dot{x}(T)+x(T)\right)+\frac12\left(\frac{\b\g+\l}{T^2}-\frac{\b\l}{T^4}\right)\|\dot{x}(T)\|^2+
\int_{t_1}^T -A(t)\|\ddot{x}(t)\|^2dt+\int_{t_1}^T -B(t)\|\dot{x}(t)\|^2dt\le
\end{align}
$$ g\left(\frac{\b}{t_1}\dot{x}(t_1)+x(t_1)\right)+\frac12\left(\frac{\b\g+\l}{t_1^2}-\frac{\b\l}{t_1^4}\right)\|\dot{x}(t_1)\|^2.$$

Taking into account that $g$ is bounded from bellow, by taking the limit as $T\To+\infty$ we obtain,  that
$$\int_{t_1}^\infty -A(t)\|\ddot{x}(t)\|^2dt<+\infty$$
and
$$\int_{t_1}^\infty -B(t)\|\dot{x}(t)\|^2dt<+\infty.$$

Consequently
$-A(t)\|\ddot{x}(t)\|^2,\,-B(t)\|\dot{x}(t)\|^2\in L^1([t_0,+\infty),\R)$, that is
$$\sqrt{|A(t)|}\ddot{x}(t),\,\sqrt{|B(t)|}\dot{x}(t)\in L^2([t_0,+\infty),\R^n).$$
In other words,
$$\frac{1}{\sqrt{t^3}}\ddot{x}(t),\,\frac{1}{\sqrt{t}}\dot{x}(s)\in L^2([t_0,+\infty),\R^n).$$
Further, \eqref{e11} and Lemma \ref{fejer-cont1} assure that there exists and is finite the limit
\begin{align}\label{e13}
\lim_{t\To+\infty}\left(g\left(\frac{\b}{t}\dot{x}(t)+x(t)\right)+\frac12\left(\frac{\b\g+\l}{t^2}-\frac{\b\l}{t^4}\right)\|\dot{x}(t)\|^2\right).
\end{align}

We show next that $\lim_{t\To+\infty}\frac{1}{t}\dot{x}(t)=0.$

We have:
$$\frac{d}{dt}\left(\frac{1}{t^2}\|\dot{x}(t)\|^2\right)=-\frac{2}{t^3}\|\dot{x}(t)\|^2+\frac{2}{t^2}\<\ddot{x}(t),\dot{x}(t)\>=
-\frac{2}{t^3}\|\dot{x}(t)\|^2+2\left\<\frac{1}{\sqrt{t^3}}\ddot{x}(t),\frac{1}{\sqrt{t}}\dot{x}(t)\right\>\le$$
$$-\frac{2}{t^3}\|\dot{x}(t)\|^2+\frac{1}{t^3}\|\ddot{x}(t)\|^2+\frac{1}{t}\|\dot{x}(t)\|^2\in L^1([t_0,+\infty)).$$
The conclusion follows by Lemma \ref{fejer-cont2}.

Similarly, we show that $\lim_{t\To+\infty}\frac{1}{t^2}\ddot{x}(t)=0.$

We have:
$$\frac{d}{dt}\left(\frac{1}{t^4}\|\ddot{x}(t)\|^2\right)=-\frac{4}{t^5}\|\ddot{x}(t)\|^2+\frac{2}{t^4}\<{x}^{(3)}(t),\ddot{x}(t)\>\le -\frac{4}{t^5}\|\ddot{x}(t)\|^2+\frac{2}{t^4}\|x^{(3)}(t)\|\|\ddot{x}(t)\|.$$
Now according to Lemma \ref{thirdderiv11} we have
$$\|x^{(3)}(t)\|\le \left(3\frac{\g}{\l}+\frac{L_g t^2}{\l}\right)\|\dot{x}(t)\|+
\left(\frac{\g}{\l}t+\frac{2}{t}\right)\|\ddot{x}(t)\|,$$
hence,
$$\frac{d}{dt}\left(\frac{1}{t^4}\|\ddot{x}(t)\|^2\right)\le
-\frac{4}{t^5}\|\ddot{x}(t)\|^2+\left(6\frac{\g}{\l}\frac{1}{t^4}+\frac{2L_g }{\l}\frac{1}{t^2}\right)\|\dot{x}(t)\|\|\ddot{x}(t)\|+
\left(\frac{2\g}{\l}\frac{1}{t^3}+\frac{4}{t^5}\right)\|\ddot{x}(t)\|^2\le$$
$$\frac{2\g}{\l}\frac{1}{t^3}\|\ddot{x}(t)\|^2+3\frac{\g}{\l}\frac{1}{t^4}\|\dot{x}(t)\|^2+3\frac{\g}{\l}\frac{1}{t^4}\|\ddot{x}(t)\|^2+
\frac{2L_g }{\l}\left\|\frac{1}{\sqrt{t}}\dot{x}(t)\right\|\left\|\frac{1}{\sqrt{t^3}}\ddot{x}(t)\right\|\le$$
$$\frac{2\g}{\l}\frac{1}{t^3}\|\ddot{x}(t)\|^2+3\frac{\g}{\l}\frac{1}{t^4}\|\dot{x}(t)\|^2+3\frac{\g}{\l}\frac{1}{t^4}\|\ddot{x}(t)\|^2+
\frac{L_g }{\l}\left(\frac{1}{t}\|\dot{x}(t)\|^2+\frac{1}{t^3}\|\ddot{x}(t)\|^2\right)\in L^1([t_0,+\infty)).$$
The conclusion follows by Lemma \ref{fejer-cont2}.

Finally, by using the continuity of $g$ and the fact that $\lim_{t\To+\infty}\frac{1}{t}\dot{x}(t)=0,$ \eqref{e13} becomes:
\begin{equation}\label{e14}\exists \lim_{t\To+\infty}\left(g\left(\frac{\b}{t}\dot{x}(t)+x(t)\right)+\frac12\left(\frac{\b\g+\l}{t^2}-\frac{\b\l}{t^4}\right)\|\dot{x}(t)\|^2\right)=
\lim_{t\To+\infty}g(x(t))\in\R.
\end{equation}

Let $\ol x\in\omega(x).$ Then, there exists a sequence $t_k\To+\infty,\,k\To+\infty$ such that $x(t_k)\To\ol x,\,k\To+\infty.$
Hence, by using the continuity of $\n g$ and the dynamical system \eqref{dysy}, we have
$$0=\lim_{k\To+\infty}\left(\frac{\l}{t_k^2}\ddot{x}(t_k)+\frac{\g}{t_k}\dot{x}(t_k)+\n g(x(t_k))\right)=\n g(\ol x),$$
which shows that $\ol x\in\crit(g).$
\end{proof}

The following result is an easy consequence of Theorem \ref{abstr}.

\begin{lemma}\label{reg} Assume that $g$ is bounded from below and for $u_0,v_0\in\R^n$ let $x$ be the unique strong global solution of  the dynamical system \eqref{dysy}. Consider the function
$$H:\R^n\times\R^n\To\R,\,H(x,y)=g(x)+\frac12\|x-y\|^2.$$

Consider $0<\b<\min\left(\frac{2\g}{3L_g},\sqrt{\frac{2\l}{L_g}}\right).$ For simplicity, for $t\ge t_1$, where $t_1$ was defined in Theorem \ref{abstr}, let us denote
$$u(t)=\frac{\b}{t}\dot{x}(t)+x(t)\mbox{ and }v(t)=\left(\sqrt{\frac{\b\g+\l}{t^2}-\frac{\b\l}{t^4}}+\frac{\b}{t}\right)\dot{x}(t)+x(t).$$

Then
\begin{itemize}
\item[(i)] $\omega(u)=\omega(v)=\omega(x);$
\item[(ii)]$\frac{d}{dt}H(u(t),v(t))\le A(t)\|\ddot{x}(t)\|^2+B(t)\|\dot{x}(t)\|^2\le 0,\,\mbox{ for all }t\ge t_1;$
\item[(iii)] there exists the limit $\lim_{t\To+\infty}H(u(t),v(t))=\lim_{t\To+\infty}g(x(t))\in\R;$
\item[(iv)] $H$ is finite and constant on $\omega(u,v)=\{(\ol x,\ol x)\in\R^n\times\R^n:\ol x\in\omega(x)\};$
\item[(v)] $\|\nabla H(u(t),v(t))\|\le q(t)\|\ddot{x}(t)\|+p(t)\|\dot{x}(t)\|,\,\,\mbox{ for all }t\ge t_1;$
\item[(vi)] $\omega(u,v)\subseteq \crit(H),$
\end{itemize}
where
$$A(t)=-\frac{\b\l}{t^3}+\frac{L_g\b^3}{2t^3},\,B(t)=-\frac{\g}{t}+\frac{L_g\b}{2t}+L_g\left|1-\frac{\b}{t^2}\right|\frac{\b}{t}-\frac{\l}{t^3}+\frac{2\b\l}{t^5},$$
$$q(t)=\frac{\l}{t^2}\mbox{ and }p(t)=\frac{L_g\b}{t}+\frac{\g}{t}+2\sqrt{\frac{\b\g+\l}{t^2}-\frac{\b\l}{t^4}}.$$

Assume further that $x$ is bounded. Then
\begin{itemize}
\item[(vii)] $\omega(u,v)$ is nonempty and compact;
\item[(viii)] $\lim_{t\To+\infty}\dist((u(t),v(t)),\omega(u,v))=0.$
\end{itemize}

\end{lemma}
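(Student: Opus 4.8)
The plan is to recognize that this entire lemma is a repackaging of Theorem \ref{abstr}, the linchpin being the algebraic identity
$$H(u(t),v(t))=g(u(t))+\tfrac12\|u(t)-v(t)\|^2=g\Big(\tfrac{\b}{t}\dot{x}(t)+x(t)\Big)+\tfrac12\Big(\tfrac{\b\g+\l}{t^2}-\tfrac{\b\l}{t^4}\Big)\|\dot{x}(t)\|^2,$$
which holds because $u(t)-v(t)=-\sqrt{\tfrac{\b\g+\l}{t^2}-\tfrac{\b\l}{t^4}}\,\dot{x}(t)$, so that $\tfrac12\|u(t)-v(t)\|^2$ reproduces exactly the quadratic term appearing in \eqref{e11}. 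In other words, $u$ and $v$ are engineered precisely so that $H(u(t),v(t))$ coincides with the energy functional whose derivative was estimated in Theorem \ref{abstr}. Granting this identity, part (ii) is immediate from \eqref{e11} together with the facts, established in Theorem \ref{abstr}, that $A(t)<0$ and $B(t)<0$ for $t>t_1$; and part (iii) is immediate from \eqref{e14}.

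For the limit-set statements (i), (iv) and (vi), the engine is that $\tfrac1t\dot{x}(t)\to 0$ by Theorem \ref{abstr}(iii). This forces $u(t)-x(t)=\tfrac{\b}{t}\dot{x}(t)\to 0$, and since the coefficient of $\dot{x}(t)$ in $v$ is $\mathcal{O}(1/t)$, also $v(t)-x(t)\to 0$, hence $u(t)-v(t)\to 0$. From these, (i) follows because $x(t_k)\to\ol x$ if and only if $u(t_k)\to\ol x$ if and only if $v(t_k)\to\ol x$; the description $\omega(u,v)=\{(\ol x,\ol x):\ol x\in\omega(x)\}$ then follows from $u(t)-v(t)\to 0$ combined with (i). For (iv), each $\ol x\in\omega(x)\subseteq\crit(g)$ gives $H(\ol x,\ol x)=g(\ol x)$, and by continuity of $g$ one has $g(\ol x)=\lim_{t\To+\infty}g(x(t))$, the common finite value from (iii); for (vi), $\nabla H(\ol x,\ol x)=(\nabla g(\ol x),0)=(0,0)$.

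Part (v) is the only genuinely computational step. Working with the $1$-norm on $\R^n\times\R^n$ (as in Lemma \ref{thirdderiv11}) and using $\nabla H(u,v)=(\nabla g(u)+(u-v),\,v-u)$, I would split $\nabla g(u)=\big(\nabla g(u)-\nabla g(x)\big)+\nabla g(x)$, bound the first term by $L_g\|u-x\|=\tfrac{L_g\b}{t}\|\dot{x}\|$ via Lipschitz continuity of $\nabla g$, and substitute $\nabla g(x(t))=-\tfrac{\l}{t^2}\ddot{x}(t)-\tfrac{\g}{t}\dot{x}(t)$ from \eqref{dysy}. Adding $\|u-v\|=\sqrt{\tfrac{\b\g+\l}{t^2}-\tfrac{\b\l}{t^4}}\,\|\dot{x}\|$ from both components of the $1$-norm then collects exactly into $q(t)\|\ddot{x}(t)\|+p(t)\|\dot{x}(t)\|$ with the stated $q,p$.

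Finally, under boundedness of $x$, the relation $\tfrac1t\dot{x}\to 0$ makes $u$ and $v$ bounded, so $\omega(x)$, and hence $\omega(u,v)$, is nonempty and compact by the usual nested-intersection argument, giving (vii); statement (viii) is the standard contradiction argument, using that any sequence $(u(t_k),v(t_k))$ with $t_k\To+\infty$ has, by boundedness, a subsequence converging to a point of $\omega(u,v)$. The main obstacle, such as it is, lies entirely in the norm bookkeeping of (v): one must commit to the $1$-norm for the coefficients to emerge as precisely $q$ and $p$, and keep the triangle-inequality splitting tight. Everything else is a direct corollary of Theorem \ref{abstr} once the energy identity above is in hand.
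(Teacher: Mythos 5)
Your proposal is correct and follows essentially the same route as the paper: you identify the energy identity $H(u(t),v(t))=g\bigl(\tfrac{\b}{t}\dot{x}(t)+x(t)\bigr)+\tfrac12\bigl(\tfrac{\b\g+\l}{t^2}-\tfrac{\b\l}{t^4}\bigr)\|\dot{x}(t)\|^2$ (which the paper leaves implicit in the remark that (ii) and (iii) are ``nothing else than \eqref{e11} and \eqref{e14}''), carry out the same Lipschitz splitting and substitution of $\nabla g(x(t))=-\tfrac{\l}{t^2}\ddot{x}(t)-\tfrac{\g}{t}\dot{x}(t)$ for (v), and derive (i), (iv), (vi)--(viii) from $\tfrac1t\dot{x}(t)\to 0$ and boundedness exactly as the paper does. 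Your nested-intersection argument for (vii) and the subsequence/contradiction argument for (viii) are standard equivalents of (and, for (viii), slightly more rigorous than) the paper's versions, so no gap remains.
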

\begin{proof} (i) According to Theorem \ref{abstr} (i)
$$\lim_{t\To+\infty}\frac{\b}{t}\dot{x}(t)=\lim_{t\To+\infty}\left(\sqrt{\frac{\b\g+\l}{t^2}-\frac{\b\l}{t^4}}+\frac{\b}{t}\right)\dot{x}(t)=0,$$
hence $$\omega(u)=\omega(v)=\omega(x).$$

(ii) and (iii) are nothing else than \eqref{e11} and \eqref{e14}.

(iv) Consider $(\ol x,\ol y)\in\omega(u,v)$. Then, there exists a sequence $t_k\To+\infty,\,k\To+\infty$ such that $u(t_k)\To\ol x,\,k\To+\infty\mbox{ and }v(t_k)\To\ol y,\,k\To+\infty.$ But according to Theorem \ref{abstr} (i) and the form of $u$ and $v$ we get that $\ol x=\ol y$ and $\ol x\in\omega(x).$ Consequently, $\omega(u,v)=\{(\ol x,\ol x)\in\R^n\times\R^n:\ol x\in\omega(x)\}.$

Now, let $(\ol x,\ol x)\in\omega(u,v)$. Then, there exists a sequence $t_k\To+\infty,\,k\To+\infty$ such that $u(t_k)\To\ol x,\,k\To+\infty\mbox{ and }v(t_k)\To\ol x,\,k\To+\infty.$
But according to (iii) $\lim_{t\To+\infty}H(u(t),v(t))=l\in\R$, hence $H(\ol x,\ol x)=\lim_{t_k\To+\infty}H(u(t_k),v(t_k))=\lim_{t\To+\infty}H(u(t),v(t))=l.$ Consequently, $H$  is constant on $\omega(u,v).$

(v) Since $\nabla H(x,y)=(\nabla g(x)+ x-y, y-x)$ by using \eqref{dysy}, we have

$$\|\nabla H(u(t),v(t))\|\le\|\nabla g(u(t))\|+2\|u(t)-v(t)\|\le\|\nabla g(u(t))-\nabla g(x(t))\|+\|\nabla g(x(t))\|+2\|u(t)-v(t)\|\le$$
$$\frac{L_g\b}{t}\|\dot{x}(t)\|+\left\|-\frac{\l}{t^2}\ddot{x}(t)-\frac{\g}{t}\dot{x}(t)\right\|+2\|u(t)-v(t)\|\le$$
$$\frac{\l}{t^2}\|\ddot{x}(t)\|+\left(\frac{L_g\b}{t}+\frac{\g}{t}+2\sqrt{\frac{\b\g+\l}{t^2}-\frac{\b\l}{t^4}}\right)\|\dot{x}(t)=$$
$$q(t)\|\ddot{x}(t)\|+p(t)\|\dot{x}(t)\|.$$

(vi) Since $\crit{H}=\{(x,y)\in\R^n\times\R^n:(0,0)\in\n H(x,y)\}=\{(\ol x,\ol x)\in\R^n\times\R^n:\ol x\in\crit(g)\},$ and
according to (i) $\omega(u,v)=\{(\ol x,\ol x)\in\R^n\times\R^n:\ol x\in\omega(x)\}$ by Theorem \ref{abstr} (iv), one has:
$$\omega(u,v)\subseteq \crit(H).$$

(vii) Since $x$ is bounded and $\frac{1}{t}\dot{x}(t)\To 0,\,t\To+\infty$ we obtain that $u$ and $v$ are bounded, respectively. Hence from Bolzano-Weierstrass Theorem one obtains that the set of limit points $\omega(u,v)$ is nonempty. Moreover
$\omega(u,v)=\{(\ol x,\ol x)\in\R^n\times\R^n:\ol x\in\omega(x)\}$, and $\omega(x)$ is bounded, thus it is enough to show that $\omega(x)$ is closed.

Consider $(\ol x_n)_{n\ge 1}\subseteq \omega(x)$ and assume that $\lim_{n\To+\infty}\ol x_n=x^*.$ We show that $x^*\in\omega(x).$

Obviously, for every $n\ge 1$ there exits the sequence $t_k^n\To+\infty,\,k\To+\infty$ such that
$$\lim_{k\To+\infty}x(t_k^n)=\ol x_n.$$

Further, $\lim_{n\To+\infty}\ol x_n=x^*,$ hence for every $\e>0$ there exists $N_\e\in\N$ such that for all $n\ge N_\e$ one has
$$\|\ol x_n-x^*\|<\frac\e2.$$
Let us fix $\e>0$ and let $n\ge 1$ fixed. Since $\lim_{k\To+\infty}x(t_k^n)=\ol x_n,$ there exists $k(n,\e)\in\N$ such that for all $k\ge k(n,\e)$ one has $$\|x(t_k^n)-\ol x_n\|<\frac\e2.$$
The  last two relations lead to
$$\|x(t_k^n)-x^*\|<\e,\,\forall n\ge N_\e,\,k\ge k(n,\e).$$

Since $t_k^n\To+\infty,\,k\To+\infty$ for all $n\ge 1$, we obtain that for every $n\ge 1$ there exists $N_n\in\N$ such that $t_k^n>n$ for all $k\ge N_n.$  For every $n\ge 1$ consider now $k_n>\max\{n, N_n,k(n,\e)\}$ and the sequence $s_n=t_{k_n}^n.$ Obviously $s_n\To+\infty,\,n\To+\infty$ and
$\|x(s_n)-x^*\|<\e,\,\forall n\ge N_\e.$ Hence
$$\lim_{n\To+\infty}x(s_n)=x^*\mbox{ and }s_n\To+\infty,\,n\To+\infty,$$
in other words $x^*\in \omega(x).$

(viii) We have
$$0\le\lim_{t\To+\infty}\dist((u(t),v(t)),\omega(u,v))\le\lim_{k\To+\infty}\dist((u(t_k),v(t_k)),\omega(u,v))=0.$$
\end{proof}

\begin{remark}\label{r} Combining (iii) and (iv) in Lemma \ref{reg}, one obtains that for every $\ol{x}\in\omega(x)$ and $t_k\To+\infty$ such that $x(t_k)\To\ol{x}$ as $k\To+\infty$ we have
$$\lim_{k\To+\infty}H(u(t_k),v(t_k))=H(\ol x,\ol x).$$
\end{remark}

The following result is the  main result of the paper.

\begin{theorem}\label{convergence}  Assume that $g$ is bounded from below and for $u_0,v_0\in\R^n$, let $x$  be the unique strong global solution of \eqref{dysy}. Consider the function
$$H:\R^n\times \R^n\To\R,\,H(x,y)=g(x)+\frac{1}{2}\|x-y\|^2.$$
Suppose that $x$ is bounded and $H$ is a KL function. Then the following statements are true
\begin{itemize}
\item[(a)] $\dot{x}(t)\in L^1([t_0,+\infty),\R^n);$
\item[(b)] $\frac{1}{t}\ddot{x}(t)\in L^1([t_0,+\infty),\R^n);$
\item[(c)] there exists $\ol{x}\in\crit(g)$ such that $\lim_{t\To+\infty}x(t)=\ol{x}.$
\end{itemize}
\end{theorem}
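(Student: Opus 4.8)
The plan is to run the standard Kurdyka--\L{}ojasiewicz descent argument, but carried out for the regularized energy $H$ evaluated along the auxiliary curve $(u(t),v(t))$ rather than along $x$ itself, since Lemma \ref{reg} has already packaged exactly the three ingredients such an argument needs: a descent estimate (ii), a gradient bound (v), and the fact that $\omega(u,v)$ is a compact set on which $H$ is finite and constant ((iv), (vii)). First I would set $l=\lim_{t\To+\infty}H(u(t),v(t))$, which exists and is finite by (iii), and apply the uniformized KL property (Lemma \ref{unif-KL-property}) with $\Omega=\omega(u,v)$ and $f=H$ to produce $\e,\eta>0$ and $\varphi\in\Theta_\eta$ with $\varphi'(H(u,v)-l)\,\|\n H(u,v)\|\ge1$ on the set $\{\dist((u,v),\Omega)<\e\}\cap\{l<H<l+\eta\}$. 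By (viii) and the monotone convergence $H(u(t),v(t))\downarrow l$ there is a $t_2\ge t_1$ past which $(u(t),v(t))$ lies in this set, provided $H(u(t),v(t))>l$. The easy case $H(u(\bar t),v(\bar t))=l$ for some $\bar t\ge t_1$ is disposed of separately: monotonicity of $H$ forces $\frac{d}{dt}H\equiv0$ on $[\bar t,+\infty)$, and since $A,B<0$ there, the descent inequality (ii) forces $\dot x\equiv\ddot x\equiv0$, so $x$ is eventually constant and (a)--(c) are immediate.

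In the main case, along the curve the chain rule together with (ii), (v) and the KL inequality gives, for a.e. $t\ge t_2$,
\begin{equation*}
-\frac{d}{dt}\varphi\bigl(H(u(t),v(t))-l\bigr)=-\varphi'(\cdot)\frac{d}{dt}H(u(t),v(t))\ge\frac{|A(t)|\,\|\ddot x(t)\|^2+|B(t)|\,\|\dot x(t)\|^2}{q(t)\|\ddot x(t)\|+p(t)\|\dot x(t)\|}.
\end{equation*}
The technical heart of the proof --- and the step I expect to be the main obstacle --- is to bound this quotient below by $C\bigl(\frac1t\|\ddot x(t)\|+\|\dot x(t)\|\bigr)$ for some $C>0$. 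This hinges on the asymptotic orders $|A(t)|\sim t^{-3}$, $|B(t)|\sim t^{-1}$, $q(t)\sim t^{-2}$, $p(t)\sim t^{-1}$, where positivity of the leading constants of $|A|$ and $|B|$ is exactly what the admissible range $0<\b<\min(\frac{2\g}{3L_g},\sqrt{2\l/L_g})$ guarantees (the constraint $\b<\frac{2\g}{3L_g}$ makes the leading coefficient of $|B|$ positive, and $\b<\sqrt{2\l/L_g}$ does the same for $|A|$). Writing $a=\|\ddot x\|$, $d=\|\dot x\|$, one expands $\bigl(q a+p d\bigr)\bigl(\frac1t a+d\bigr)$ and observes that the diagonal terms $\frac{q}{t}a^2$ and $p\,d^2$ are of the same order as $|A|a^2$ and $|B|d^2$, while the cross terms are of order $t^{-2}ad=(t^{-3/2}a)(t^{-1/2}d)$ and are therefore absorbed into the diagonal ones by Young's inequality. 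Hence $(qa+pd)(\frac1t a+d)\le K(|A|a^2+|B|d^2)$ for large $t$, which is precisely the sought lower bound on the quotient.

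Integrating the resulting differential inequality on $[t_2,T]$ and using that $\varphi\ge0$ with $\varphi(H(u(t),v(t))-l)\To\varphi(0)=0$, the left-hand side telescopes to a quantity bounded by $\varphi(H(u(t_2),v(t_2))-l)<\infty$ uniformly in $T$; letting $T\To+\infty$ yields $\int_{t_2}^{+\infty}\bigl(\frac1t\|\ddot x(t)\|+\|\dot x(t)\|\bigr)dt<\infty$. Since $x,\dot x,\ddot x$ are continuous on the compact interval $[t_0,t_2]$, the same integrability holds on $[t_0,+\infty)$, which establishes (a) and (b). Finally, (a) gives $\|x(t')-x(t)\|\le\int_t^{t'}\|\dot x(s)\|ds\To0$ as $t,t'\To+\infty$, so the Cauchy criterion makes $x(t)$ convergent to some $\ol x$; then $\omega(x)=\{\ol x\}$, and Theorem \ref{abstr}(iv) yields $\ol x\in\crit(g)$, proving (c).
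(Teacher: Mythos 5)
Your proposal is correct and follows essentially the same route as the paper: the same split into the case where $H(u(t),v(t))$ reaches its limit value (forcing $\dot x\equiv\ddot x\equiv 0$) and the strict case, the same application of the uniformized KL property to $H$ on $\Omega=\omega(u,v)$ via Lemma \ref{reg} (ii), (iv), (v), (vii), (viii), and the same integration of $\varphi$ along the trajectory. The only difference is bookkeeping at the final step: the paper first deduces integrability of the quotients $\frac{t^{-3}\|\ddot x(t)\|^2}{q(t)\|\ddot x(t)\|+p(t)\|\dot x(t)\|}$ and $\frac{t^{-1}\|\dot x(t)\|^2}{q(t)\|\ddot x(t)\|+p(t)\|\dot x(t)\|}$, handles the cross term by the arithmetic--geometric mean inequality, and recombines using the orders of $p$ and $q$, whereas you prove the equivalent pointwise bound $\frac{|A(t)|\,\|\ddot x(t)\|^2+|B(t)|\,\|\dot x(t)\|^2}{q(t)\|\ddot x(t)\|+p(t)\|\dot x(t)\|}\ge C\left(\|\dot x(t)\|+\frac{1}{t}\|\ddot x(t)\|\right)$ directly --- which is precisely inequality \eqref{e18} that the paper itself establishes later, in the proof of Theorem \ref{th-conv-rate}.
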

\begin{proof} Under the assumptions of Lemma \ref{reg}, consider $(\ol x,\ol x)\in\omega(u,v).$ Then

$$\lim_{t\To+\infty}H(u(t),v(t))=H(\ol x,\ol x).$$

{\bf Case I.} There exists $\ol t\ge t_1$ such that $H(u(\ol t),v(\ol t))=H(\ol x,\ol x).$ From Lemma \ref{reg} (ii) we have
$$\frac{d}{dt}H(u(t),v(t))\le A(t)\|\ddot{x}(t)\|^2+B(t)\|\dot{x}(t)\|^2\le 0,\,\mbox{ for all }t\ge t_1,$$
hence,
$$H(u(t),v(t))\le H(\ol x,\ol x),\,\mbox{ for all }t\ge \ol t.$$
On the other hand
$$H(u(t),v(t))\ge \lim_{t\To+\infty}H(u(t),v(t))=H(\ol x,\ol x),\,\mbox{ for all }t\ge t_1,$$
hence
$$H(u(t),v(t))= H(\ol x,\ol x),\,\mbox{ for all }t\ge \ol t.$$ But then, $H(u(t),v(t))$ is constant, hence $\frac{d}{dt}H(u(t),v(t))=0,\,\mbox{ for all }t\ge \ol t,$ which leads to
$$0\le A(t)\|\ddot{x}(t)\|^2+B(t)\|\dot{x}(t)\|^2\le 0,\,\mbox{ for all }t\ge \ol t.$$
Since $A(t)<0$ and $B(t)<0$ for every $t\ge t_1$, the latter inequality can hold only if
$$\dot{x}(t)=\ddot{x}(t)=0,\mbox{ for all }t\ge \ol t.$$
Consequently, $\dot{x},\ddot{x}\in L^1([t_0,+\infty),\R^n)$ and $x(t)=\ol x$ is constant on $[\ol t,+\infty).$

{\bf Case II.} For every $t\ge t_1$ one has that $H(u(t),v(t))> H(\ol x,\ol x).$
Let $\Omega=\omega(u,v).$ Then according to Lemma \ref{reg}, $\Omega$ is nonempty and compact and $H$ is constant on $\Omega.$
Since $H$ is KL, according to Lemma \ref{unif-KL-property}  there exist $\varepsilon,\eta >0$ and $\varphi\in \Theta_{\eta}$ such that for all $(\ol x,\ol x)\in\Omega$ and for all $(z,w)$ in the intersection
$$ \{(z,w)\in\R^n\times\R^n: \dist((z,w),\Omega)<\varepsilon\}\cap \{(z,w)\in\R^n\times\R^n: H(\ol x,\ol x)<H(z,w)<H(\ol x,\ol x)+\eta\}$$
one has
$$\varphi'(H(z,w)-H(\ol x,\ol x))\|\n H(z,w)\|\geq 1.$$

According to Lemma \ref{reg} (viii), $\lim_{t\To+\infty}\dist((u(t),v(t)),\Omega)=0$, hence there exists $t_2\ge 0$ such that
$$\dist((u(t),v(t)),\Omega)<\e,\,\forall t\ge t_2.$$

Since $$\lim_{t\To+\infty}\left(H(u(t),v(t))\right)= H(\ol{x},\ol{x})$$
and
$$H(u(t),v(t))> H(\ol{x},\ol{x}),$$
there exists $t_3>t_1$ such that
$$H(\ol x,\ol x)<H(u(t),v(t))< H(\ol{x},\ol{x})+\eta,\,\forall t\ge t_3.$$

Hence, for all $t\ge T=\max(t_2,t_3)$ we have
$$\varphi'(H(u(t),v(t))-H(\ol{x},\ol{x}))\cdot\|\n H(u(t),v(t))\|\ge 1.$$

 According to Lemma \ref{reg} (ii) and (v), we have $\frac{d}{dt}H(u(t),v(t))\le A(t)\|\ddot{x}(t)\|^2+B(t)\|\dot{x}(t)\|^2\le 0$
and $\|\nabla H(u(t),v(t))\|\le q(t)\|\ddot{x}(t)\|+p(t)\|\dot{x}(t)\|,$ hence,
$$\frac{d}{dt}\varphi(H(u(t),v(t))-H(\ol x,\ol x))=\varphi'(H(u(t),v(t))-H(\ol x,\ol x))\frac{d}{dt}H(u(t),v(t))\le\frac{A(t)\|\ddot{x}(t)\|^2+B(t)\|\dot{x}(t)\|^2}{q(t)\|\ddot{x}(t)\|+p(t)\|\dot{x}(t)\|}$$
for all $t\in [T,+\infty).$
By  integrating on the interval $[T,\ol t],\,\ol t>T$  we obtain
$$\varphi(H(u(\ol t),v(\ol t))-H(\ol x,\ol x))- \int_T^{\ol t} \frac{A(s)\|\ddot{x}(s)\|^2+B(s)\|\dot{x}(s)\|^2}{q(s)\|\ddot{x}(s)\|+p(s)\|\dot{x}(s)\|}ds\le\varphi(H(u(T),v(T))-H(\ol x,\ol x)).$$

Since $\varphi$ is bounded from below, $A(s)<0,B(s)<0,p(s)>0,q(s)>0$ for all $s\ge T$ and $\ol t$ was arbitrary chosen, we obtain that
$$0\le\int_T^{+\infty} \frac{-A(s)\|\ddot{x}(s)\|^2-B(s)\|\dot{x}(s)\|^2}{q(s)\|\ddot{x}(s)\|+p(s)\|\dot{x}(s)\|}ds\le\varphi(H(u(T),v(T))-H(\ol x,\ol x))$$
which leads to
$$ \frac{\frac{1}{t^3}\|\ddot{x}(t)\|^2}{q(t)\|\ddot{x}(t)\|+p(t)\|\dot{x}(t)\|},\frac{\frac{1}{t}\|\dot{x}(t)\|^2}{q(t)\|\ddot{x}(t)\|+p(t)\|\dot{x}(t)\|}\in
 L^1([t_0,+\infty),\R^n).$$

By using the arithmetical-geometrical mean inequality we have
 $$\frac12\left(\frac{\frac{1}{t^3}\|\ddot{x}(t)\|^2}{q(t)\|\ddot{x}(t)\|+p(t)\|\dot{x}(t)\|}+\frac{\frac{1}{t}\|\dot{x}(t)\|^2}{q(t)\|\ddot{x}(t)\|+p(t)\|\dot{x}
 (t)\|}\right)\ge
 \frac{\frac{1}{t^2}\|\ddot{x}(t)\|\|\dot{x}(t)\|}{q(t)\|\ddot{x}(t)\|+p(t)\|\dot{x}(t)\|},$$
 hence,
 $$ \frac{\frac{1}{t^2}\|\ddot{x}(t)\|\|\dot{x}(t)\|}{q(t)\|\ddot{x}(t)\|+p(t)\|\dot{x}(t)\|}\in L^1([t_0,+\infty),\R^n).$$

 Since $\mathcal{O}\left(\frac{q(t)}{t}\right)=\mathcal{O}\left(\frac{1}{t^3}\right),\,\mathcal{O}(p(t))=\mathcal{O}\left(\frac{1}{t}\right),$ and
 $\mathcal{O}\left(q(t)+\frac{p(t)}{t}\right)=\mathcal{O}\left(\frac{1}{t^2}\right),$ one has
 $$\frac{\frac{q(t)}{t}\|\ddot{x}(t)\|^2}{q(t)\|\ddot{x}(t)\|+p(t)\|\dot{x}(t)\|}+\frac{p(t)\|\dot{x}(t)\|^2}{q(t)\|\ddot{x}(t)\|+p(t)\|\dot{x}(t)\|}+
\frac{\left(q(t)+\frac{p(t)}{t}\right)\|\ddot{x}(t)\|\|\dot{x}(t)\|}{q(t)\|\ddot{x}(t)\|+p(t)\|\dot{x}(t)\|}=$$
$$\|\dot{x}(t)\|+\frac1t\|\ddot{x}(t)\|,$$
which  shows that $$\dot{x}(t),\frac{1}{t}\ddot{x}(t)\in L^1([t_0,+\infty),\R^n).$$

Now, since  $\dot{x}(t)\in L^1([t_0,+\infty),\R^n)$, it follows that there exists and is finite the limit $\lim_{t\To+\infty}x(t).$ But, obviously, then one must have
$$\lim_{t\To+\infty}x(t)=\ol x.$$
\end{proof}

\begin{corollary}\label{sa}Assume that $g$ is semi-algebraic and bounded from below and for $u_0,v_0\in\R^n$, let $x$  be the unique strong global solution of \eqref{dysy}. If $x$ is bounded then all the conclusions of Theorem \ref{convergence} hold.
\end{corollary}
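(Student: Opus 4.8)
The plan is to reduce the corollary to a direct application of Theorem \ref{convergence}. Since $g$ is bounded from below and the trajectory $x$ is bounded by hypothesis, the only assumption of Theorem \ref{convergence} that is not already granted is that the regularization
$$H:\R^n\times\R^n\To\R,\quad H(x,y)=g(x)+\frac12\|x-y\|^2$$
is a KL function. Thus the whole argument collapses to verifying the Kurdyka--\L{}ojasiewicz property of $H$, after which conclusions (a)--(c) follow verbatim from Theorem \ref{convergence}.

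To establish that $H$ is a KL function, the strategy I would use is to prove that $H$ is \emph{semi-algebraic}, since every semi-algebraic function belongs to the class of KL functions, as recalled in Section 2. First I would note that the map $(x,y)\mapsto g(x)$, regarded as a function on $\R^n\times\R^n$, is semi-algebraic: its graph is
$$\{(x,y,r)\in\R^n\times\R^n\times\R : (x,r)\in\gr(g)\},$$
which is the product of the semi-algebraic set $\gr(g)$ with the factor $\R^n$ in the free variable $y$, and such products are again semi-algebraic. Next, $(x,y)\mapsto\frac12\|x-y\|^2$ is a polynomial in the coordinates of $(x,y)$, hence trivially semi-algebraic. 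Finally, the sum of two semi-algebraic functions is semi-algebraic, so $H$ is semi-algebraic and therefore a KL function.

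Once $H$ is known to be a KL function, every hypothesis of Theorem \ref{convergence} is satisfied, and the conclusions of that theorem transfer directly to the present setting, giving $\dot{x}\in L^1([t_0,+\infty),\R^n)$, $\frac1t\ddot{x}\in L^1([t_0,+\infty),\R^n)$, and the existence of $\ol x\in\crit(g)$ with $\lim_{t\To+\infty}x(t)=\ol x$. I do not expect a genuine obstacle here: the result is essentially a specialization, and the only point requiring care is the bookkeeping of the stability properties of the semi-algebraic class, namely closure under adjoining a free variable and under finite sums, both of which are standard.
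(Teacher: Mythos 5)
Your proposal is correct and follows essentially the same route as the paper: reduce the corollary to Theorem \ref{convergence} by showing $H$ is semi-algebraic, using that $(x,y)\mapsto\frac12\|x-y\|^2$ is semi-algebraic and that the semi-algebraic class is closed under addition, whence $H$ is a KL function. Your explicit verification that $(x,y)\mapsto g(x)$ is semi-algebraic on the product space (via adjoining the free variable $y$ to the graph of $g$) is a detail the paper leaves implicit, but it is a correct and harmless addition rather than a different argument.
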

\begin{proof}
  We need only to prove that $H$ is a KL function. Since the class of semi-algebraic functions is closed under addition (see for example \cite{b-sab-teb}) and
$(x,y) \mapsto \frac12\|x-y\|^2$ is semi-algebraic, we obtain that the  function
$$H:\R^m\times\R^m\To\R\cup\{+\infty\},\,H(x,y)=g(x)+\frac12\|y-x\|^2$$ is semi-algebraic. Consequently, $H$ is a KL function.
\end{proof}

\begin{remark}\label{cond-x-bound}
Note that in the hypotheses of Theorem \ref{convergence} one need to assume that the  trajectory $x$ generated by \eqref{dysy} is bounded and $g$ is  bounded from below.

In order to validate these assumptions it is enough to assume that  $g$ is coercive,
that is $$\lim_{\|u\|\rightarrow+\infty}(g)(u)=+\infty.$$

Indeed, notice that $g$ is bounded from below, being a continuous and coercive function (see for example \cite{rock-wets}). From \eqref{e12} it follows that $\frac{\b}{T}\dot{x}(T)+x(T)$ is contained for every $T\geq t_1$ in a lower level set
of $g$, which is a bounded set due to the coercivity assumption. But, according to Theorem \ref{abstr}, $\frac{\b}{t}\dot{x}(t)\To 0,\,t\To+\infty$, hence one can easily deduce that $x$  is bounded.
\end{remark}

\section{Convergence rates}\label{sec5}

In the context of optimization problems involving KL functions, it is known (see \cite{lojasiewicz1963, b-d-l2006, attouch-bolte2009})
that convergence rates of the trajectory can be formulated in terms of the  \L{}ojasiewicz exponent.

In the following theorem we obtain convergence rates for the trajectory generated by \eqref{dysy}.

\begin{theorem}\label{th-conv-rate}  Assume that $g$ is bounded from below and for $u_0,v_0\in\R^n$, let $x$  be the unique strong global solution of \eqref{dysy}. Consider the function
$$H:\R^n\times \R^n\To\R,\,H(x,y)=g(x)+\frac{1}{2}\|x-y\|^2.$$
Suppose that $x$ is bounded and let $\ol x\in\crit(g)$ be such that $\lim_{t\To+\infty}x(t)=\ol x$ and $H$ fulfills the {\L}ojasiewicz property at $(\ol x,\ol x)\in\crit H$ with {\L}ojasiewicz exponent  $\t$.

Then, there exist $a_1,a_2,a_3,a_4>0$ and $t'>0$ such that for every $t \in [t', +\infty)$ the following statements are true
\begin{itemize}
\item[(a)] if $\t\in(0,\frac12),$ then $x$ converges in finite time;
\item[(b)] if $\t=\frac12,$ then $\|x(t)-\ol x\|\le a_1 e^{-a_2t^2}$;
\item[(c)] if $\t\in(\frac12,1),$ then $\|x(t)-\ol x\|\le (a_3t^2+a_4)^{-\frac{1-\t}{2\t-1}}.$
\end{itemize}
\end{theorem}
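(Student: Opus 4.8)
The plan is to run a \L{}ojasiewicz-type argument on the energy $\sigma(t):=H(u(t),v(t))-H(\ol x,\ol x)$, where $u,v$ are the auxiliary trajectories from Lemma \ref{reg}. First I would record that $\sigma$ is nonnegative, nonincreasing (by Lemma \ref{reg}(ii)) and tends to $0$ (by Lemma \ref{reg}(iii)), and that, since $x(t)\To\ol x$ forces $(u(t),v(t))\To(\ol x,\ol x)$, there is some $t'\ge t_1$ beyond which the \L{}ojasiewicz inequality at $(\ol x,\ol x)$ applies, namely $\sigma(t)^\t\le K\|\n H(u(t),v(t))\|$ for all $t\ge t'$.

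The heart of the proof is a single differential inequality for $\sigma$. Combining the descent estimate $\dot\sigma(t)\le A(t)\|\ddot x(t)\|^2+B(t)\|\dot x(t)\|^2\le0$ with the gradient bound $\|\n H(u(t),v(t))\|\le q(t)\|\ddot x(t)\|+p(t)\|\dot x(t)\|$, I would exploit the precise polynomial orders $-A(t)=c_1/t^3$, $-B(t)\ge c_2/t$, $q(t)=\l/t^2$ and $p(t)\le c_3/t$ (for $t$ large, with $c_1,c_2,c_3>0$). These give $q(t)^2\le (C/t)(-A(t))$ and $p(t)^2\le(C/t)(-B(t))$, hence $\|\n H(u(t),v(t))\|^2\le 2\bigl(q^2\|\ddot x\|^2+p^2\|\dot x\|^2\bigr)\le (C/t)\bigl(-A\|\ddot x\|^2-B\|\dot x\|^2\bigr)\le (C/t)(-\dot\sigma(t))$. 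Feeding in the \L{}ojasiewicz inequality $\sigma^{2\t}\le K^2\|\n H\|^2$ produces the master estimate
\begin{equation*}
-\dot\sigma(t)\ge c\,t\,\sigma(t)^{2\t},\qquad t\ge t',
\end{equation*}
for some $c>0$. I expect this comparison step to be the main obstacle: it is precisely the extra factor $t$ — arising from the unbounded damping through the orders of $A,B,p,q$ — that upgrades the usual linear/polynomial rates to the superlinear $t^2$-rates, so the bookkeeping of these orders must be done carefully.

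Next I would convert a decay rate for $\sigma$ into one for the trajectory. Repeating the mechanism of Theorem \ref{convergence} (the arithmetic--geometric mean step together with the order comparisons $\mathcal O(q/t)=\mathcal O(1/t^3)$, $\mathcal O(p)=\mathcal O(1/t)$, $\mathcal O(q+p/t)=\mathcal O(1/t^2)$) gives $\|\dot x(t)\|+\tfrac1t\|\ddot x(t)\|\le C_0\bigl(-\tfrac{d}{dt}\varphi(\sigma(t))\bigr)$ with $\varphi(s)=\tfrac{K}{1-\t}s^{1-\t}$; integrating on $[t,+\infty)$ and using $\varphi(\sigma(s))\To 0$ yields $\int_t^{+\infty}\|\dot x(s)\|\,ds\le C_0\varphi(\sigma(t))$. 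Since $\|x(t)-\ol x\|\le\int_t^{+\infty}\|\dot x(s)\|\,ds$, this delivers the control $\|x(t)-\ol x\|\le C\,\sigma(t)^{1-\t}$.

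Finally I would integrate the master inequality case by case. For $\t=\tfrac12$ it reads $-\dot\sigma\ge c\,t\,\sigma$, so $\tfrac{d}{dt}\ln\sigma\le -c\,t$ and $\sigma(t)\le \widetilde C\, e^{-ct^2/2}$; combined with $\|x(t)-\ol x\|\le C\sigma(t)^{1/2}$ this gives (b). For $\t\in(\tfrac12,1)$, since $1-2\t<0$ one has $\tfrac{d}{dt}\sigma^{1-2\t}\ge(2\t-1)c\,t$, whence $\sigma(t)^{1-2\t}\ge a\,t^2+b$ and $\sigma(t)\le(a\,t^2+b)^{-1/(2\t-1)}$; then $\|x(t)-\ol x\|\le C\sigma(t)^{1-\t}$ yields exactly the bound $(a_3 t^2+a_4)^{-\frac{1-\t}{2\t-1}}$ of (c). For $\t\in(0,\tfrac12)$, $1-2\t>0$ gives $\tfrac{d}{dt}\sigma^{1-2\t}\le-(1-2\t)c\,t$, so integrating forces $\sigma(t)^{1-2\t}$ to vanish at some finite $t^\ast$; hence $\sigma(t^\ast)=0$, and by the Case~I argument of Theorem \ref{convergence} the trajectory is constant beyond $t^\ast$, which is the finite-time convergence asserted in (a).
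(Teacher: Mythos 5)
Your proposal is correct, but it runs the \L{}ojasiewicz machinery on a different quantity than the paper does. You derive a differential inequality for the \emph{energy gap} $\sigma(t)=H(u(t),v(t))-H(\ol x,\ol x)$, namely $-\dot\sigma(t)\ge c\,t\,\sigma(t)^{2\t}$, by squaring the gradient bound of Lemma \ref{reg}(v) and matching orders against $-A(t)\sim t^{-3}$, $-B(t)\sim t^{-1}$ (your checks $q^2\le (C/t)(-A)$, $p^2\le (C/t)(-B)$ are exactly right, since $q=\l/t^2$ and $p=\mathcal O(1/t)$); you then transfer to the trajectory in a second step via $\|x(t)-\ol x\|\le C\sigma(t)^{1-\t}$, which is the classical Attouch--Bolte two-step scheme (value decay first, then length via the desingularizing function $\varphi(s)=\frac{K}{1-\t}s^{1-\t}$). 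The paper instead closes the loop directly on the \emph{tail-length} function $\s(t)=\int_t^{+\infty}\bigl(\|\dot x(s)\|+\frac1s\|\ddot x(s)\|\bigr)ds$: using the pointwise comparison \eqref{e18} and the reverse bound $q(t)\|\ddot x(t)\|+p(t)\|\dot x(t)\|\le\frac at\bigl(\|\dot x(t)\|+\frac1t\|\ddot x(t)\|\bigr)$ it obtains $\dot\s(t)\le-\a t\,\s(t)^{\frac{\t}{1-\t}}$ (inequality \eqref{e19}), for which $\|x(t)-\ol x\|\le\s(t)$ is immediate from \eqref{e17}. The two ODEs carry different exponents ($2\t$ versus $\frac{\t}{1-\t}$) but, after your $\sigma^{1-\t}$ transfer, yield identical rates in all three regimes, and in both arguments the decisive extra factor $t$ --- the trace of the unbounded damping in the orders of $A,B,p,q$ --- is what upgrades the usual exponential/polynomial rates to the $e^{-a t^2}$ and $(a_3t^2+a_4)^{-\frac{1-\t}{2\t-1}}$ rates. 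Your approach buys a cleaner conceptual split (energy decay, then trajectory decay) at the cost of needing \emph{both} the squared-gradient comparison and the \eqref{e18}-type comparison from Theorem \ref{convergence}; the paper's buys a one-shot argument on $\s$ at the cost of the slightly more delicate quotient estimate. One small point to tighten: in cases (b) and (c) you divide by powers of $\sigma$, so you should split off, exactly as the paper does at the start of its proof (Case I of Theorem \ref{convergence}), the degenerate possibility that $H(u(\ol t),v(\ol t))=H(\ol x,\ol x)$ at some finite $\ol t$ --- there the trajectory is eventually constant and the claimed bounds hold trivially; you invoke this only for (a). Likewise, the integration constant in (c) may come out negative, so $a_4>0$ requires enlarging $t'$; both are cosmetic repairs.
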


\begin{proof}
We define for every $t\in [t_1, +\infty)$, ($t_1$ is defined in the proof of Theorem \ref{abstr}),
$$\s(t):=\int_t^{+\infty}\left(\|\dot{x}(s)\|+\frac1s\|\ddot{x}(s)\|\right)ds.$$
Let $t\in [t_1, +\infty)$ be fixed. For $T\ge t$ we have
$$\|x(t)-\ol x\|=\left \|x(T)-\ol x-\int_t^{T}\dot{x}(s)ds \right\|\le\|x(T)-\ol x\|+\int_t^{T}\|\dot{x}(s)\|ds.$$
By taking the limit as  $T\To+\infty$ we obtain
\begin{equation}\label{e17}
\|x(t)-\ol x\|\le\int_t^{+\infty}\|\dot{x}(s)\|ds\le\s(t).
\end{equation}

We  show next, that there exists $m<0$ such that
\begin{equation}\label{e18}
 \frac{A(t)\|\ddot{x}(t)\|^2+B(t)\|\dot{x}(t)\|^2}{q(t)\|\ddot{x}(t)\|+p(t)\|\dot{x}(t)\|}\le m\left(\|\dot{x}(t)\|+\frac1t\|\ddot{x}(t)\|\right).
 \end{equation}

Indeed,
$$(q(t)\|\ddot{x}(t)\|+p(t)\|\dot{x}(t)\|)\left(\|\dot{x}(t)\|+\frac1t\|\ddot{x}(t)\|\right)=\frac{q(t)}{t}\|\ddot{x}(t)\|^2+\left(q(t)+\frac{p(t)}{t}\right)\|\dot{x}(t)\|\|\ddot{x}(t)\|+
p(t)\|\dot{x}(t)\|^2=$$
$$\frac{q(t)}{t}\|\ddot{x}(t)\|^2+\left(tq(t)+p(t)\right)\|\dot{x}(t)\|\left\|\frac1t\ddot{x}(t)\right\|+
p(t)\|\dot{x}(t)\|^2\le$$
$$\left(\frac{q(t)}{t}+\frac{tq(t)+p(t)}{2t^2}\right)\|\ddot{x}(t)\|^2+\left(p(t)+\frac{tq(t)+p(t)}{2}\right)\|\dot{x}(t)\|^2=$$
$$\left(\frac32\frac{q(t)}{t}+\frac{p(t)}{2t^2}\right)\|\ddot{x}(t)\|^2+\left(\frac32p(t)+\frac{tq(t)}{2}\right)\|\dot{x}(t)\|^2\le$$
$$\frac{A(t)}{m}\|\ddot{x}(t)\|^2+\frac{B(t)}{m}\|\dot{x}(t)\|^2,$$
where
$$m=\max\left(\max_{t\ge t_1}\frac{A(t)}{\frac32\frac{q(t)}{t}+\frac{p(t)}{2t^2}},\max_{t\ge t_1}\frac{B(t)}{\frac32p(t)+\frac{tq(t)}{2}}\right).$$
It is an easy verification that $m<0.$

As we have seen in the proof of Theorem \ref{convergence} if there exists $\ol t\ge t_1$ such that $H(u(\ol t),v(\ol t))=H(\ol x,\ol x)$ then $x$ is constant on $[\ol t,+\infty),$ therefore the conclusion follows.

On the other hand, if for every $t\ge t_1$ one has that $H(u(t),v(t))> H(\ol x,\ol x)$ then according to the proof of Theorem \ref{abstr} (i) and the fact that $\lim_{t\To+\infty}x(t)=\ol x$, there exists $\e>0$ and $t'\ge t_1$ such that
$$\|(u(t),v(t))-(\ol x,\ol x)\|<\e.$$
Further, according to the proof of Theorem \ref{convergence} one has
$$K\frac{d}{dt}(H(u(t),v(t))-H(\ol x,\ol x))^{1-\t}\le \frac{A(t)\|\ddot{x}(t)\|^2+B(t)\|\dot{x}(t)\|^2}{q(t)\|\ddot{x}(t)\|+p(t)\|\dot{x}(t)\|},\mbox{ for all }t\ge t'.$$
Now by using \eqref{e18} we obtain that
$$M\left(\|\dot{x}(t)\|+\frac1t\|\ddot{x}(t)\|\right)+\frac{d}{dt}(H(u(t),v(t))-H(\ol x,\ol x))^{1-\t}\le 0,\mbox{ for all }t\ge t',$$
where $M=-\frac{m}{K}>0.$

We integrate the last relation on the interval $[t,T],\,T>t\ge t'$ and we obtain
$$M\int_t^T\|\dot{x}(s)\|+\frac1s\|\ddot{x}(s)\|ds+(H(u(T),v(T))-H(\ol x,\ol x))^{1-\t}\le (H(u(t),v(t))-H(\ol x,\ol x))^{1-\t}.$$
Consequently, for $T\To+\infty$ we have
$$M\s(t)\le (H(u(t),v(t))-H(\ol x,\ol x))^{1-\t},\mbox{ for all }t\ge t'.$$
But, $$(H(u(t),v(t))-H(\ol x,\ol x))^{\t}\le K\|\n H(u(t),v(t)\|$$
and, according to Lemma \ref{reg} (v),
$$\|\n H(u(t),v(t)\|\le q(t)\|\ddot{x}(t)\|+p(t)\|\dot{x}(t)\|,\mbox{ for all }t\ge t',$$
hence
$$M\s(t)\le K^{\frac{1-\t}{\t}}(q(t)\|\ddot{x}(t)\|+p(t)\|\dot{x}(t))^{\frac{1-\t}{\t}},\mbox{ for all }t\ge t'.$$
Easily can be checked that there exists $a>0$ such that
$$q(t)\|\ddot{x}(t)\|+p(t)\|\dot{x}(t)\le\frac{a}{t}\left(\dot{x}(t)\|+\frac1t\|\ddot{x}(t)\|\right),\mbox{ for all }t\ge t',$$
hence
$$M\s(t)\le \left(\frac{aK}{t}\right)^{\frac{1-\t}{\t}}\left(\dot{x}(t)\|+\frac1t\|\ddot{x}(t)\|\right)^{\frac{1-\t}{\t}},\mbox{ for all }t\ge t'.$$
But $\dot{x}(t)\|+\frac1t\|\ddot{x}(t)\|=-\dot{\s}(t)$, consequently
\begin{equation}\label{e19}
-\a t\s^{\frac{\t}{1-\t}}(t)\ge\dot{\s}(t),\mbox{ for all }t\ge t',
\end{equation}
where $\a=\frac{M^\frac{\t}{1-\t}}{aK}>0.$

If $\t=\frac12$ then \eqref{e19} becomes $\a t\s(t)+\dot{\s}(t)\le0,\mbox{ for all }t\ge t'.$ By  multiplying with $e^{\frac{\a t^2}{2}}$ then integrating on $[t',t]$ we get that there exists $a_1,a_2>0$ such that
$$\s(t)\le a_1 e^{-a_2 t^2},\mbox{ for all }t\ge t'.$$
Now using \eqref{e17} we obtain
$$\|x(t)-\ol x\|\le a_1 e^{-a_2 t^2},\mbox{ for all }t\ge t',$$
which  proves (b).

Assume now that $0<\t<\frac12.$  We prove (a) by contradiction. So, assume that for all $t\ge t'$ one has $\s(t)\neq 0.$
By using \eqref{e19} again we obtain
$$\frac{d}{dt}\s^{\frac{1-2\t}{1-\t}}(t)=\frac{1-2\t}{1-\t}\s^{\frac{-\t}{1-\t}}(t)\dot{\s}(t)\le-\a\frac{1-2\t}{1-\t}t,\mbox{ for all }t\ge t'.$$
By integrating on $[t',t]$ we obtain
$$\s^{\frac{1-2\t}{1-\t}}(t)\le-\ol\a t^2+\ol\b,\mbox{ for all }t\ge t',$$
where $\ol\a>0.$  Hence, $\s(t)\To -\infty$ as $t\To+\infty,$ which is in contradiction with $\s(t)\ge 0$ for all $t\ge t'.$
It is thus deduced by contradiction that there exists $T \ge t'$ such that $\s(T) = 0.$ Since, according to \eqref{e19}, $\s$ is
nonnegative and nonincreasing, one has $\s(t) = 0$ for all $t \ge T,$ therefore  \eqref{e17}  implies that $x$ is constant on $[T,+\infty)$ and (a) follows.

Assume now that $\frac12<\t<1.$ By using \eqref{e19}  we obtain
$$\frac{d}{dt}\s^{\frac{1-2\t}{1-\t}}(t)=\frac{1-2\t}{1-\t}\s^{\frac{-\t}{1-\t}}(t)\dot{\s}(t)\ge\a\frac{2\t-1}{1-\t}t,\mbox{ for all }t\ge t'.$$
By integrating on $[t',t]$ we obtain
$$\s^{\frac{1-2\t}{1-\t}}(t)\ge a_3 t^2+a_4,\mbox{ for all }t\ge t',$$
where $a_3,a_4>0,$ or equivalently,
$$\s(t)\le (a_3 t^2+a_4)^{-\frac{1-\t}{2\t-1}},\mbox{ for all }t\ge t'.$$
Now using \eqref{e17} we obtain
$$\|x(t)-\ol x\|\le (a_3 t^2+a_4)^{-\frac{1-\t}{2\t-1}},\mbox{ for all }t\ge t'$$
and this proves (c).
\end{proof}

According to Theorem 3.6 \cite{LiP}, if $g$ has the KL property with KL exponent $\t\in\left[\frac12,1\right)$ at $\ol x\in\R^m,$ then the function $H:\R^m\times\R^m\To\R\cup\{+\infty\},\,H(x,y)=g(x)+\frac12\|y-x\|^2$ has the KL property at $(\ol x,\ol x)\in\R^m\times\R^m$ with the same KL exponent $\t.$ This result allows us to reformulate Theorem \ref{th-conv-rate}.

\begin{corollary}\label{cuse}   Assume that $g$ is bounded from below and for $u_0,v_0\in\R^n$, let $x$  be the unique strong global solution of \eqref{dysy}.
Suppose that $x$ is bounded and let $\ol x\in\crit(g)$ be such that $\lim_{t\To+\infty}x(t)=\ol x$ and $g$ fulfills the {\L}ojasiewicz property at $\ol x\in\crit g$ with {\L}ojasiewicz exponent   $\t\in\left[\frac12,1\right)$. If $\t=\frac12$ then the convergence rates stated at {\rm Theorem \ref{th-conv-rate}(b)} hold,  if  $\t\in\left(\frac12,1\right)$  then the convergence rates stated at  {\rm Theorem \ref{th-conv-rate}(c)} hold.
\end{corollary}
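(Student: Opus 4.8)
The plan is to reduce this corollary entirely to Theorem \ref{th-conv-rate}, the only new ingredient being the transfer of the Łojasiewicz exponent from $g$ to its regularization $H$. All of the hard analytic work—the descent estimate, the integrability of $\dot x$ and $\frac1t\ddot x$, and the case analysis on $\t$—is already packaged in Theorem \ref{th-conv-rate}, whose hypotheses are: $g$ bounded below, $x$ bounded, $\lim_{t\To+\infty}x(t)=\ol x$, and $H$ fulfilling the Łojasiewicz property at $(\ol x,\ol x)$ with some exponent. The first three are assumed directly in the present statement, so the entire task is to verify the fourth.

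First I would restate the hypothesis on $g$ in the language of the KL exponent. By Definition \ref{Lexpo}, $g$ fulfilling the Łojasiewicz property at $\ol x$ with exponent $\t$ means there exist $K,\e>0$ such that $|g(x)-g(\ol x)|^\t\le K\|\n g(x)\|$ whenever $\|x-\ol x\|<\e$. As recalled after Definition \ref{KL-property} (the result of Łojasiewicz), this is exactly the KL property at $\ol x$ realized by the desingularizing function $\varphi(s)=\tfrac{K}{1-\t}s^{1-\t}\in\Theta_\eta$, whose KL exponent is the same $\t$; indeed $\varphi'(s)=K s^{-\t}$, so the KL inequality $\varphi'(g(x)-g(\ol x))\|\n g(x)\|\ge 1$ is equivalent to $(g(x)-g(\ol x))^\t\le K\|\n g(x)\|$. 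Thus $g$ has the KL property at $\ol x$ with KL exponent $\t\in\left[\frac12,1\right)$.

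Next I would invoke the cited Theorem 3.6 of \cite{LiP}: since $g$ enjoys the KL property at $\ol x$ with exponent $\t\in\left[\frac12,1\right)$, the regularization $H(x,y)=g(x)+\frac12\|y-x\|^2$ enjoys the KL property at $(\ol x,\ol x)$ with the \emph{same} exponent $\t$. Reading this equivalence backwards through the correspondence $\varphi(s)=Cs^{1-\t}$ just used, $H$ fulfills the Łojasiewicz property at $(\ol x,\ol x)\in\crit H$ with Łojasiewicz exponent $\t$. At this point every hypothesis of Theorem \ref{th-conv-rate} is in force, and the conclusion is immediate: applying that theorem, if $\t=\frac12$ we obtain the estimate of Theorem \ref{th-conv-rate}(b), namely $\|x(t)-\ol x\|\le a_1 e^{-a_2 t^2}$, and if $\t\in\left(\frac12,1\right)$ we obtain the estimate of Theorem \ref{th-conv-rate}(c), namely $\|x(t)-\ol x\|\le (a_3 t^2+a_4)^{-\frac{1-\t}{2\t-1}}$.

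I do not expect any genuine obstacle here; the corollary is a straightforward application once the exponent is transferred. The only point demanding care is the bookkeeping that the notion of Łojasiewicz exponent in Definition \ref{Lexpo} coincides with the KL exponent appearing in \cite{LiP}, which is precisely the standard identification $\varphi(s)=Cs^{1-\t}$ made in the two displays above. Note also why the restriction $\t\in\left[\frac12,1\right)$ is natural: Theorem 3.6 of \cite{LiP} is stated for exponents in this range, and this is exactly the range covered by parts (b) and (c) of Theorem \ref{th-conv-rate}; the finite-time case (a), corresponding to $\t\in\left(0,\frac12\right)$, is therefore not addressed by this corollary.
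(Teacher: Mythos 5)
Your proposal is correct and follows essentially the same route as the paper: both proofs consist of invoking Theorem 3.6 of \cite{LiP} to transfer the \L{}ojasiewicz exponent $\t\in\left[\frac12,1\right)$ from $g$ at $\ol x$ to the regularization $H$ at $(\ol x,\ol x)$, and then applying Theorem \ref{th-conv-rate} directly. Your additional bookkeeping identifying Definition \ref{Lexpo} with the KL property via the desingularizing function $\varphi(s)=Cs^{1-\t}$ only makes explicit a step the paper leaves implicit.
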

\begin{proof}
 Indeed, according to Theorem 3.6 \cite{LiP}
$H$ has the {\L}ojasiewicz property at $(\ol x,\ol x)\in\crit H$ with the {\L}ojasiewicz exponent   $\t\in\left[\frac12,1\right)$. Hence, Theorem \ref{th-conv-rate} can be applied.
\end{proof}

In case we assume that the function $g$ is strongly convex, then Theorem \ref{th-conv-rate} assures superlinear convergence rates for the trajectories generated by \eqref{dysy}. The following result holds.

\begin{theorem}\label{ratestrconv} Assume that the objective function $g$ is strongly convex and let $x^*$ be the unique minimizer of $g.$ For $u_0,v_0\in\R^n$, let $x$  be the unique strong global solution of \eqref{dysy}.

Then, there exist $a_1,a_2>0$ and $t'>0$ such that for every $t \in [t', +\infty)$ one has
 $$\|x(t)-x^*\|\le a_1 e^{-a_2t^2}.$$
\end{theorem}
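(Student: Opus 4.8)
The plan is to reduce the strongly convex case to the Łojasiewicz‑exponent result already proved in Theorem~\ref{th-conv-rate}, case (b), by verifying two facts: that the trajectory $x$ is bounded, and that the regularization $H$ satisfies the Łojasiewicz property at $(x^*,x^*)$ with exponent $\t=\frac12$. Once these are in place, Theorem~\ref{th-conv-rate}(b) immediately yields the claimed estimate $\|x(t)-x^*\|\le a_1 e^{-a_2 t^2}$, which is precisely the asserted superlinear rate.

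\textbf{Step 1: boundedness and convergence.} A strongly convex function is coercive and bounded from below, so by Remark~\ref{cond-x-bound} the trajectory $x$ is automatically bounded and $g$ is bounded from below. Strong convexity also guarantees that $g$ has a unique minimizer $x^*$, which is the unique critical point, so $\omega(x)\subseteq\crit(g)=\{x^*\}$ by Theorem~\ref{abstr}(iv). Since $x$ is bounded, $\omega(x)$ is nonempty, and therefore $\omega(x)=\{x^*\}$. A standard argument (a bounded trajectory whose only limit point is $x^*$ converges to $x^*$) gives $\lim_{t\To+\infty}x(t)=x^*$. Thus the hypotheses of Theorem~\ref{th-conv-rate} on the convergence of $x$ to a critical point are met.

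\textbf{Step 2: Łojasiewicz exponent $\t=\frac12$.} For a strongly convex $C^1$ function with $L_g$‑Lipschitz gradient, the standard quadratic growth and gradient‑domination inequalities give, for some $\mu>0$,
\begin{equation*}
\frac{\mu}{2}\|x-x^*\|^2\le g(x)-g(x^*)\le\frac{1}{2\mu}\|\n g(x)\|^2,
\end{equation*}
so $g$ satisfies the Łojasiewicz property at $x^*$ with exponent $\t=\frac12$. By the cited Theorem~3.6 of \cite{LiP}, the regularization $H(x,y)=g(x)+\frac12\|x-y\|^2$ then inherits the Łojasiewicz property at $(x^*,x^*)$ with the same exponent $\t=\frac12$. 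Equivalently, one can invoke Corollary~\ref{cuse} directly with $\t=\frac12$, which already packages this transfer of the exponent.

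\textbf{Conclusion.} With $x$ bounded, $x(t)\To x^*\in\crit(g)$, and $H$ satisfying the Łojasiewicz property at $(x^*,x^*)$ with exponent $\t=\frac12$, Theorem~\ref{th-conv-rate}(b) applies and furnishes constants $a_1,a_2>0$ and $t'>0$ with $\|x(t)-x^*\|\le a_1 e^{-a_2 t^2}$ for all $t\ge t'$. The main point requiring care is Step~2: verifying that strong convexity forces the Łojasiewicz exponent to be exactly $\frac12$ and that this exponent passes to $H$; everything else is routine bookkeeping built on the earlier results. I expect no serious obstacle, since both the growth estimate for strongly convex functions and the exponent‑preservation under the quadratic regularization are standard and already referenced in the paper.
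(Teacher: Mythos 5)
Your proposal is correct and follows essentially the same route as the paper: both reduce the statement to Corollary~\ref{cuse} (i.e.\ Theorem~\ref{th-conv-rate}(b)) after noting that strong convexity yields coercivity, boundedness of the trajectory, $\omega(x)=\{x^*\}$ with $x(t)\To x^*$, and the \L{}ojasiewicz exponent $\t=\frac12$ at $x^*$. The only cosmetic difference is that you verify the exponent directly via the gradient-domination inequality $g(x)-g(x^*)\le\frac{1}{2\mu}\|\n g(x)\|^2$, whereas the paper simply cites \cite{attouch-bolte2009} for this fact.
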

\begin{proof}
We emphasize that the strongly convex function $g$  is coercive, see \cite{BauComb}. According to \cite{rock-wets} the function $g$ is bounded from bellow. Hence, $\omega(x)=\{x^*\}$ and $x(t)\To x^*,\,t\To+\infty.$ According to \cite{attouch-bolte2009}, $g$ satisfies the Kurdyka-{\L}ojasiewicz property at $x^*$ with the {\L}ojasiewicz exponent $\t=\frac12.$ The conclusion now follows from Corollary \ref{cuse}.
\end{proof}

\vskip 6mm
\noindent{\bf Acknowledgements}
The author is thankful to an anonymous referee for his/her valuable remarks and suggestions which improved the quality of the paper.
\noindent
This work was supported by a grant of the Ministry of Research, Innovation and Digitization, CNCS -
UEFISCDI, project number PN-III-P1-1.1-TE-2021-0138, within PNCDI III.

\end{document}